\newcommand{\N}{\mathbbm{N}}
\newcommand{\R}{\mathbbm{R}}
\newcommand{\e}{\mathrm{e}}
\newcommand{\E}{\mathrm{E}}
\def\bu{{\bf u}}
\def\by{{\bf y}}
\def\bz{{\bf z}}
\def\bw{{\bf w}}
\def\bff{{\bf f}}
\def\bB{{\bf B}}
\def\bF{{\bf F}}
\def \reyn{ \boldsymbol {\sigma}^{(\textsc{r})}}
\newcommand{\mean}[1]{\overline{#1}}
\newtheorem{theorem}{Theorem}[section]
\newtheorem{corollary}{Corollary}[section]
\newtheorem{remark}{Remark}[section]
\def\Om{\Omega}
\newcommand{\g} {\nabla} 
\def \reyny{ \boldsymbol {\sigma}^{(\textsc{r})}_\by}
\def \reynz{ \boldsymbol {\sigma}^{(\textsc{r})}_\bz }
\newcommand{\ep}{\varepsilon}
\numberwithin{equation}{section}
\renewcommand{\@biblabel}[1]{#1\hfill \hspace{-0.2cm}}
\begin{document}

\title{Long-Time Reynolds Averaging of Reduced Order Models for Fluid Flows: Preliminary Results}

\author{Luigi C. Berselli}   
\address{(Luigi C. Berselli) Department of Mathematics, Universit\`a di Pisa, Pisa,
  I56127, ITALY }
\email{luigi.carlo.berselli@unipi.it}
\author{Traian Iliescu}
\address{(Traian Iliescu) Department of Mathematics, Virginia Tech, Blacksburg, VA 24061, USA.}
\email{iliescu@vt.edu}
\author{Birgul Koc}
\address{(Birgul Koc) Department of Mathematics, Virginia Tech, Blacksburg, VA 24061, USA.}
\email{birgul@vt.edu}
\author{Roger Lewandowski}
\address{(Roger Lewandowski) Univ Rennes \& INRIA, CNRS - IRMAR UMR 6625 \& Fluminance team,
   Rennes, F-35042, France.}
\email{Roger.Lewandowski@univ-rennes1.fr}
%

\address{%
}


\begin{abstract}
  We perform a theoretical and numerical investigation of the time-average of energy
  exchange among modes of Reduced Order Models (ROMs) of fluid flows.  We are interested
  in the statistical equilibrium problem, and especially in the possible forward and
  backward average transfer of energy among ROM basis functions (modes).  We consider two
  types of ROM modes: eigenfunctions of the Stokes operator and Proper Orthogonal
  Decomposition (POD) modes.  We prove analytical results for both types of ROM modes and
  we highlight the differences between them.  We also investigate numerically whether the
  time-average energy exchange between POD modes is positive.  To this end, we utilize the
  one-dimensional Burgers equation as a simplified mathematical model, which is commonly
  used in ROM tests.  The main conclusion of our numerical study is that, for long enough
  time intervals, the time-average energy exchange from low index POD modes to high index
  POD modes is positive, as predicted by our theoretical results.
\end{abstract}
\keywords{Reduced order model, long-time behavior, Reynolds equations, statistical
  equilibrium.  
}

\maketitle
\section{Introduction}
In this note we combine some results on the long-time averaging of fluid equations with
the recently developed techniques for reduced order model (ROM) development. In this
preliminary work we start proving some analytical results that characterize the
time-averaged effect of the exchange of energy between various modes, both in the case of
the computable decomposition made with proper orthogonal decomposition (POD) type basis
functions and with the abstract basis made with eigenfunctions. We will show that the
results obtainable with a generic (but computable) basis are less precise than those
obtainable with the abstract spectral basis, the difference coming from the lack of
orthogonality of the gradients of the POD basis functions.

We then provide a few numerical examples. Concerning the analytical results we will prove
partial results for the energy exchange between large and small scales, showing the
difference between the use of a spectral type basis, versus a POD one.  In particular, we
are interested in results connected to the statistical equilibrium problem, which can be
deduced in a computable way by a long-time averaging of the solutions. We want to
investigate the possible forward and backward average transfer of energy. The properties
of a turbulent flow are computable (and relevant) only in an average sense. In this
respect, we want to follow the most classical approach dating back to Stokes, Reynolds,
and Prandtl of considering long-time averages of the solution as the key quantity to be
computed or observed. Therefore, we do not need to consider statistical averages and link
them with time averaging by means of (unproved) ergodic hypotheses.

To introduce the problem that we will consider, we recall that a Newtonian incompressible flow
(with constant density) can be described by the Navier-Stokes equations (NSE in the sequel)
\begin{equation} 
  \label{eq:NSE}
  \left\{\begin{aligned}
      \partial_t \bu - \nu\Delta \bu +(\bu \cdot \nabla)\, \bu + \nabla p &=\bff \qquad
      \mbox{in }\Omega\times (0,T),
      \\
      \nabla \cdot \bu&=0 \qquad \mbox{in }\Omega\times (0,T),
      \\
      \bu&=\mathbf{0} \qquad \mbox{on }\Gamma\times (0,T) ,
      \\
      \bu(\cdot, 0)&=\bu(0) \qquad \mbox{in }\Omega,
    \end{aligned}
  \right.
\end{equation}
with Dirichlet conditions when the motion takes place in a smooth and bounded domain
$\Omega\subset\R^3$ with solid walls $\Gamma:=\partial\Omega$. The unknowns are the
velocity field $\bu$ and the scalar pressure $p$, while the positive constant $\nu>0$ is
the kinematic viscosity. The key parameter to detect the nature of the problem is the
non-dimensional Reynolds number, which is defined as
\begin{equation*}
  Re=\frac{U L}{\nu} ,
\end{equation*}
where $U$ and $L$ are a characteristic velocity and length of the problem. In realistic
problems, the Reynolds number can be extremely large (in many cases of the order of
$10^{8}$, but up to the order of $10^{20}$ in certain geophysical problems).  For
simplicity in the notation, we use as a control parameter the viscosity and we assume that
it is very small. Hence, the effect of the regularization (similar to the diffusion in
heat transfer) due to the Laplacian is negligible and the behavior of solutions is really
turbulent and rather close to the motion of ideal fluids.  Due to the well-known
difficulties in performing direct numerical simulations (DNS), it is nowadays a
well-established technique that of trying to reduce the computational efforts by
simulating only the largest scales, which are nevertheless the only ones really observable
and the only ones which are needed in order to optimize macroscopic properties as, e.g.,
drag or lift in the craft design. In this framework, the large eddy simulation (LES)
methods, which emerged in the last 30 years, are among the most popular, and they found a
very relevant role within both theorists' and practitioners' communities. For recent LES
reviews see for instance the monographs~\cite{Sag2001,BIL2006, LR2012,CL2014}.

The LES methods are in many cases very well developed and both theoretically and
computationally appealing, especially for problems without boundaries, but many
difficulties and open problems arise when facing solid boundaries. In most cases the
design of efficient LES methods is guided by deep properties of the solutions, as emerging
from fine theorems of mathematical analysis. Furthermore, the ultimate goal of having a
golden standard is far from being obtained, and large families of methods (wave-number
asymptotics, differential filters, $\alpha$-models) attracted the interest of different
communities, spanning from the pure mathematicians, to the applied geophysicist and
mechanical engineers, as well as computational practitioners. For this reasons we believe
that it is important to have some well-defined and clearly stated guidelines, that can be
adapted to different problems. In this way the methods can be improved with insight not
only from experts in modeling, but also from mathematicians, physicists, and computational
scientists.

In this respect, we point out that very recently the use of other (more flexible and
computationally simpler) ways of finding approximate systems has become very popular. The
LES methods itself can be specialized or even glued with other ways of determining
approximate and much smaller systems, which are computable in a very efficient way.  For
instance, reduced order modeling is increasingly becoming an accepted paradigm, in which
applications to fluids are still being
developed~\cite{HRS2016,LMQR2014,QMN2016,QRM2011,Roz2014}.

The basic ansatz at the basis of the use of these models is the approximation of the
velocity by a truncation of the series
\begin{equation*}
  \bu=\sum_{k=1}^\infty \bu_k \bw_k,
\end{equation*}
where $\{\bw_k\}_{k\in\N}$ is a basis constructed by using the POD, not necessarily made
with eigenfunctions of the Stokes operator, and the coefficients of the $L^2$-projections
are evaluated as follows
\begin{equation*}
  \bu_k=\frac{\int_{\Omega}\bu\cdot\bw_k\,dx}{\int_\Omega|\bw_k|^2\,dx}.
\end{equation*}
The appealing property of this approach is that the choice of the basis is adapted-to and
determined-by the problem itself. Generally the basis is chosen after a preliminary
numerical computation, hence it contains at least the basic features of the solution and
geometry of the problem to be studied.  The other basic fact is that the kinetic energy of
the problem is the key quantity under consideration; in fact the $L^2$-projection is used
to determine the approximate velocity and also the energy content in the basis
construction. To determine the number $r\in \N$ such that the solution is projected over
the span generated by the (orthogonal) functions $V_r:=\text{span}\{\bw_1,\dots,\bw_r\}$,
generally it is assumed that the projection of the solution over $V_r$ contains a large
percentage (say 80$\%$) of the total kinetic energy of the underlying system.

It turns out that a basis associated with the problem at hand can greatly improve the
effectiveness of the ROM. Its proper choice can be of great interest in applications to
fluid flows~\cite{WWXI2017a,XWWI2017b,xie2018lagrangian}. The main goal and novelty of the
present paper is to try to combine results on the long-time behavior of fluid flows
(especially in the case of statistical equilibrium) with reduced order modeling. We use
this approach in order to capture both the long-time averages and the energy exchange of
the ROM solutions.  In this respect, we are extending to the POD setting the results based
on statistical solutions by Foias~\textit{et al.}~\cite{Foi197273,FMRT2001a} and those
more recently obtained for time-averages in~\cite{BL2018,Lew2015}. In this respect, the
main theoretical results of this paper, stated in Theorems~\ref{thm:newtheorem}
and~\ref{thm:newtheorem2} below, can be viewed as a spectral version of the results
of~\cite{BL2018,Lew2015}. It tells that low frequency modes yield a Reynolds stress which
is dissipative in the mean, the total spatial mean work of it being larger than the long
time average of the dissipation of the fluctuations, which is consistent with observations
and results in~\cite{BL2018, Foi197273}. However, the analysis shows that the triad
interaction between high and low frequency modes yields an additional non positive term in
the budget between the Reynolds stress of high modes and the corresponding mean
dissipation. This term may be non dissipative and may permit an inverse energy cascade,
which is not in contradiction with the fact that the total Reynolds stress is dissipative
in mean.

\medskip

\textbf{Plan of the paper.} The paper is organized as follows: In
Section~\ref{sec:ROM}, we outline the general framework for ROMs of fluid flows, and we
display the exchange of energy between large scales and small scales for two ROM bases:
the POD and the Stokes eigenfunctions.  In Section~\ref{sec:time-averages}, we present
some preliminaries on long-time averages.  In Section~\ref{sec:average-energy}, we prove
the main theoretical results for the average transfer of energy for ROMs constructed with
the POD and the Stokes eigenfunctions.  In Section~\ref{sec:numerical-results}, we
investigate the theoretical results in the numerical simulation of the one-dimensional
Burgers equation.  Finally, in Section~\ref{sec:conclusions}, we draw conclusions and
outline future research directions.
\section{Reduced Order Modeling}
\label{sec:ROM}
As outlined in the introduction, one key quantity in the pure and applied analysis of the
Navier-Stokes equations is the kinetic energy, since it is both a meaningful physical
quantity, but also the analysis of its budget is at the basis of the abstract existence
results (cf. Constantin and Foias~\cite{CF1988}) and also of the conventional turbulence
theories of Kolmogorov~\cite{Kol1941}. It is well-known that after testing the
NSE~\eqref{eq:NSE} by $\bu$ and integrating over the space-time, one (formally) obtains
the global energy balance
\begin{equation*}
  \frac{1}{2}\|\bu(t)\|^2+\nu\int_0^t\|\nabla
  \bu(s)\|^2\,ds=\frac{1}{2}\|\bu(0)\|^2+\int_0^t\int_\Omega \bff\cdot\bu\,dx ds.
\end{equation*}
At present, we are only able to prove that the above balance holds true with the sign of
``less or equal'' for the class of weak (or turbulent) solutions that we are able to
construct globally in time, without restrictions on the viscosity and on the size of
square summable initial velocity $\bu(0)$ and external force $\bff$.  It is of fundamental
importance in many problems in pure mathematics to understand under which hypotheses the
equality holds true.  We are now focusing on the ``global energy'' which is an averaged
quantity, since it is the integral of the square modulus of the velocity over the entire
domain. We also point out that at the other extreme one can deduce, without the
integration over the domain, the point-wise relation
\begin{equation*}
  \partial_t\frac{|\bu|^2}{2}+|\nabla \bu|^2-\Delta\frac{|\bu|^2}{2}+\text{div
  }\left(\frac{\bu|\bu|^2}{2}+p\,\bu\right)=\bff\cdot\bu.
\end{equation*}
In between there is the so called ``local energy'' which can be obtained by multiplying
the NSE by $\bu\,\phi$, where $\phi$ is a bump function, before integrating in the
space-time variables. The goal is to show that
\begin{equation*}
  \int_0^T\int_{\Omega}|\nabla \bu|^2\phi\,dx dt=
  \int_{0}^{T}\int_{\Omega}\left[\frac{|\bu|^2}{2}\left(\partial_t\phi+\Delta\phi\right)
    +\left(\frac{|\bu|^2}{2}+p\right)\bu\cdot\nabla \phi+\bff\cdot \bu\,\phi\right]\,dx dt
\end{equation*}
holds (at least with the inequality sign) for all smooth scalar functions $\phi\in
C^\infty_0((0,T)\times\Omega)$ such that $\phi\geq0$. The validity of such an inequality
is one of the requests to use the partial regularity results, but it is also one of the
requests to be satisfied by weak solutions constructed by numerical or LES methods. In
this respect, see Guermond, Oden and Prudhomme~\cite{GOP2004} and also~\cite{BFS2018}.

In this paper we study the global energy in the perspective that it can be reconstructed
in a computable way or it can be well approximated by the POD basis functions $\{\bw_k\}$.

The fact that the functions $\{\bw_k\}$ can be constructed to be orthonormal with respect to
the scalar product in $(L^2(\Omega)^{3},\,\|\cdot\|)$ allows us to evaluate the kinetic energy
easily by the following numerical series
\begin{equation*}
  \E(\bu)=\frac{1}{2}\sum_{k=1}^\infty\|\bu_k\|^2.
\end{equation*}
Since we are going to use only a reduced number of ROM modes, it is relevant to consider
the energy contained in functions described by a restricted set of indices. Hence,
following the notation in~\cite{FMRT2001a}, if we define
\begin{equation*}
  \bu_{m',m''}:=\sum_{k=m'}^{m''}\bu_k\bw_k,
\end{equation*}
then the kinetic energy content of $\bu_{m',m''}$ is simply evaluated as follows 
\begin{equation*}
  \E(\bu_{m',m''})=\frac{1}{2}\sum_{k=m'}^{m''}\|\bu_k\|^2.
\end{equation*}
We denote by $\mathbf{P}_m$ the projection operator over the subspace $V_m$ spanned by the
first $m$-functions
$\{\bw_k\}_{k=1,\dots,m}$ and we want to investigate the energy transfer between the various
modes, together with averaged long-time behavior associated with this splitting.

We are going to adapt well-known studies on the decomposition in small and large
eddies. This would be the case if the functions $\bw_k$ are chosen to be the
eigenfunctions of the Stokes operator, hence associated with large and small
frequencies. In our setting the basis is determined by the solution of a simplified
problem, which can be treated computationally, and the basis functions are orthonormal in
$L^2(\Omega)$, but we cannot expect that their gradients are also orthogonal.

\medskip

For the NSE, the standard ROM is constructed as follows:
\begin{enumerate}
\item[(i)] choose modes $\{ \bw_1, \ldots, \bw_d\}$, which represent the recurrent spatial
  structures of the given flow;
\item[(ii)] choose the dominant modes $\{\bw_1,\ldots,\bw_m\}$, with $m \leq d$, as basis
  functions for the ROM;
\item[(iii)] use a Galerkin truncation $\bu_m = \sum_{j=1}^{m} a_j \, \bw_j$;
\item[(iv)] replace $\bu$ with $\bu_m$ in the NSE; 
\item [(v)] use a Galerkin projection of NSE~($\bu_m$) onto the ROM space 
$V_m:=\mbox{span} \{\bw_1,\ldots,\bw_m\}$ to obtain
  a low-dimensional dynamical system, which represents the ROM:
  \begin{equation*}
    \dot{a}= A\,a+a^{\top}\,B\,a\, ,
  \end{equation*}
  where $a$ is the vector of unknown ROM coefficients and $A, B$ are ROM operators; 
\item[(vi)] in an offline stage, compute the ROM operators;
\item[(vii)] in an on-line stage, repeatedly use the ROM (for various parameter settings
  and/or longer time intervals).
\end{enumerate}
Hence, there is a very natural splitting of the velocity field $\bu$ into two components,
the part coherent with the basis expansion associated with the less energetic modes and
the remainder. This can be formalized as follows:
\begin{equation*}
  \bu=\by+\bz,
\end{equation*}
where, for $m\in \N$ which can be selected computationally (based, e.g., on the relative
kinetic energy content in the first $m$ POD-modes, but other choices relative to the
enstrophy are possible) in order to have a significant amount of the energy content of the
flow
\begin{equation*}
  \by=\sum_{k=1}^{m}\bu_k\bw_k=\mathbf{P}_m\bu\qquad\text{and}\qquad
  \bz=\sum_{k=m+1}^{+\infty}\bu_k\bw_k=(\mathbf{I}-\mathbf{P}_m)\,\bu=:\mathbf{Q}_m\,\bu.  
\end{equation*}
We observe that we are considering the functions $\{\bw_k\}$ as divergence-free. Generally
they are not ``exactly divergence-free,'' but numerically we can consider that they have
vanishing divergence, hence in the computations which will follow the pressure terms can
be dropped by a standard Leray projection. It will be nevertheless interesting to consider
also bases which are not divergence-free. Relevant results, due to the computational
simplifications, could be also derived by the combination of ROM with artificial
compressibility methods, as those introduced in~\cite{DLM2017,GMS2006}.

In addition, we consider the external force as stationary, that is $\bff=\bff(x)\in
L^2(\Omega)^{3}$ and we look for conditions holding at statistical equilibrium. Our purpose is
to determine --if possible-- the long-time behavior of $\by$ and to analyze the energy
budget between low and high modes in the orthogonal decomposition determined by the
functions $\bw_k$.

As usual in many problems fluid mechanics, we use the Hilbert space functional setting with
\begin{equation*}
  \begin{aligned}
    & \mathcal{V} = \{ \boldsymbol{\varphi} \in \mathcal{D}(\Om)^3, \, \, \g \cdot
  \boldsymbol{\varphi} = 0 \}, 
  \\
  &H =
  \left\{\bu\in L^2(\Omega)^3, \ \nabla\cdot \bu=0,\
    \bu\cdot\mathbf{n}=0 \text{ on }\Gamma\right\},
  \\
  &V = 
  \left\{\bu\in H^1_{0}(\Omega)^3, \,  \nabla\cdot \bu=0\right\},
\end{aligned}
\end{equation*}
where $\mathbf{n}$ denotes the outward normal unit vector. Moreover, $V'$ is the
topological dual space of $V$. We will also denote by $<\,.\, ,\,. \, >$  the duality
pairing between $V$ and $V'$. We recall that $\mathcal{V}$ is dense in $H$ and $V$ for their
respective topologies~\cite{GR86, JLL69}.

Once we project $L^2(\Omega)^3$ over the subspace $H$ of divergence-free and
tangential vector fields by means of the Leray projection operator $\mathrm{P}$, we have
the following abstract (functional) equation in $H$
\begin{equation*}
  \frac{d \bu}{dt}+\nu A\bu+B(\bu,\bu)=\mathrm{P}\,\bff,
\end{equation*}
where $A:=-\mathrm{P}\,\Delta$, while $B(\bu,\bu):=\mathrm{P} \,((\bu\cdot\nabla)\,\bu)$.
As usual in this analysis (see for instance~\cite{FMRT2001a}) we can start by assuming
that the input force can be decomposed within a finite sum of basis functions (or that it
belongs to $V_{m}$, which will be clarified in the next section section, in particular by
Theorem~\ref{thm:thm_proj}), hence
\begin{equation*}
  \mathbf{P}_m \,\bff=\bff.
\end{equation*}
We  split the Navier-Stokes equations into a coupled system for $\by\in
\mathbf{P}_m H$ and $\bz\in (\mathbf{P}_m H)^\perp=\mathbf{Q}_m H$ as follows 
\begin{equation}
\label{eq:system}
\begin{aligned}
    &\frac{d \by}{dt}-\nu\, \mathbf{P}_m (\Delta\bu)+\mathbf{P}_m
    B(\by+\bz,\by+\bz)=\mathbf{P}_m \,\bff,
    \\
    & \frac{d \bz}{dt}-\nu\,
    \mathbf{Q}_m(\Delta\bu)+\mathbf{Q}_m B(\by+\bz,\by+\bz)=\mathbf{0},  
  \end{aligned}
\end{equation}
where we used that both $\mathbf{P}_m$ and $\mathbf{Q}_m $ commute with the time
derivative.

Once we evaluate the kinetic energy, since $\mathbf{P}_m\by=\by$ and
$\mathbf{Q}_m\,\bz=\bz$ we get (by integrating by parts and by using the fact
that functions vanish at the boundary) that
\begin{equation*}
  \begin{aligned}
    & -\nu\int_\Omega \mathbf{P}_m (\Delta\bu)\cdot \by\,dx=-\nu\int_\Omega
    (\Delta\bu)\cdot \by\,dx=-\nu\int_\Omega (\Delta\by+\Delta\bz)\cdot \by\,dx=
    \nu\,\|\nabla\by\|^2+\nu\int_\Omega\nabla \by:\nabla\bz\,dx,
    \\
    & -\nu\int_\Omega \mathbf{Q}_m (\Delta\bu)\cdot \bz\,dx=-\nu\int_\Omega
    (\Delta\bu)\cdot \bz\,dx=-\nu\int_\Omega (\Delta\by+\Delta\bz)\cdot \bz\,dx=
    \nu\,\|\nabla\bz\|^2+\nu\int_\Omega\nabla \by:\nabla\bz\,dx.
  \end{aligned}
\end{equation*}
In this way we can obtain the following equality and inequality
\begin{equation}
  \label{eq:splitting_POD}
  \begin{aligned}
    &\frac{1}{2}\frac{d }{dt}\|\by\|^2+\nu \|\nabla
    \by\|^2+\nu(\nabla\by,\nabla\bz)+(B(\by+\bz,\by+\bz),\by)=(\bff,\by),
    \\
    & \frac{1}{2}\frac{d }{dt}\|\bz\|^2+\nu \|\nabla
    \bz\|^2+\nu(\nabla\by,\nabla\bz)+(B(\by+\bz,\by+\bz),\bz)\leq\mathbf{0},
  \end{aligned}
\end{equation}
These are the two basic balance equations that we will use to infer the behavior and
transfer of the kinetic energy between $\by$ and $\bz$. Notice that the balance relation
for $\by$, involving just a finite combination of rather smooth functions is an equality,
while the second one is an inequality. In fact, the second one can be derived by a
limiting argument and in the limit the lower semi-continuity of the norm will produce the
inequality.

Since the tri-linear term $(B(\bu,\bu),\bw)$ is skew-symmetric with respect to the last two
variables, we obtain from~\eqref{eq:splitting_POD}
\begin{equation} 
  \begin{aligned}
    &\frac{1}{2}\frac{d }{dt}\E(\by)+\nu \|\nabla
    \by\|^2+\nu(\nabla\by,\nabla\bz)=(B(\by,\by),\bz)-(B(\bz,\bz),\by)+(\bff,\by),
    \\
    & \frac{1}{2}\frac{d }{dt}\E(\bz)+\nu \|\nabla
    \bz\|^2+\nu(\nabla\by,\nabla\bz) \leq-(B(\by,\by),\bz)+(B(\bz,\bz),\by).
    \label{eqn:energy-equation-pod}
  \end{aligned}
\end{equation}
This is a formal setting, which is obviously true for strong solutions of the NSE, where
the inequality in~\eqref{eqn:energy-equation-pod} is an equality. When considering weak
solutions, the integral $(B(\bz,\bz),\bz)$ might be not defined in $L^1(0,T)$ for
regularity issues. However, one can still rigorously
derive~\eqref{eqn:energy-equation-pod} by a double frequency truncation or a
regularization of the operator $B$ by considering $(B(\bz\star\rho_\varepsilon, \bz),\bz)$
for a given standard mollifier $\rho_\ep$ and passing to the limit when $\ep \to
0$. Details are standard and out of the scope of the present paper.

We observe that $-(B(\by,\by),\bz)$ is the energy flux induced in the more energetic terms
by the inertial forces associated to less energetic modes, while $-(B(\bz,\bz),\by)$ is
the energy flux induced in the less energetic terms by the inertial forces associated to
more energetic modes.  In a schematic way we can decompose the rate of transfer of kinetic
energy $\mathrm{e}_m(\bu)$ into two terms as follows
\begin{equation} 
  \label{eq:em} 
  \mathrm{e}_m(\bu):= e^{\uparrow}(\bu)- e^{\downarrow}(\bu)\qquad \text{ with}\qquad
  e^{\uparrow}(\bu):=-(B(\by,\by),\bz),\qquad   e^{\downarrow}(\bu):=(B(\bz,\bz),\by). 
\end{equation}
We also use the following notation: 
\begin{equation} 
  \label{eq:Em} 
  \mathcal{E}_{m}(\bu) := - \nu(\nabla\by,\nabla\bz). 
\end{equation}

Hence, we can rewrite~\eqref{eqn:energy-equation-pod} as follows 
\begin{equation}
  \begin{aligned}
    & \frac{1}{2}\frac{d }{dt}\E(\by)+\nu \|\nabla \by\|^2 =
    \mathcal{E}_{m}(\bu)-\mathrm{e}_m(\bu)+(\bff,\by), 
    \\
    & \frac{1}{2}\frac{d }{dt}\E(\bz)+\nu \|\nabla \bz\|^2 \leq
    \mathcal{E}_{m}(\bu)+\mathrm{e}_m(\bu). 
    \label{eqn:energy-equation-pod-concise}
  \end{aligned}
\end{equation}

\begin{remark}
  We recall that apart from extremely simple geometries and provided one is willing to use
  in a systematic way special functions as the Bessel ones or the spherical harmonics
  (which are nevertheless time consuming in their evaluation), the explicit calculations
  in numerical tests will not be so easy to be obtained in a precise and efficient
  way. Hence, the solution of~\eqref{eq:system} and the long-time integration of its
  solutions poses hard numerical problems.
\end{remark}

We observe that, since the viscosity $\nu>0$ can be considered small in applications to
real-life flows, and since the interaction between the more energetic and less energetic
basis functions is weak, we can suppose that the integral
\begin{equation*}
  \nu\int_\Omega\nabla \by:\nabla\bz\,dx,
\end{equation*}
even if non vanishing is negligible in the long-time (at least in a first approximation).
We will show later on that --at least on the numerical tests-- this assumption is
reasonable and that the results obtained by using this approximation are numerically
sound.

We point out for the reader that it we have a first fundamental difference with respect to
the classical splitting based on the use of a spectral basis (which will be recalled in
Section~\ref{sec:spectral}, where the latter integral vanishes exactly. For this reason
In the next section we will show derivation of the corresponding system of equations,
which holds, when the eigenfunctions are used.
\subsection{On the spectral decomposition}
\label{sec:spectral}
In this section we compare the results of the previous section with the well-established
ones which can be proved if the spectral decomposition, i.e., that made with
eigenfunctions of the Stokes operator $\{\mathcal{W}_k\}$ is used, instead that of a
generic POD basis.  We recall that, by classical results about compact operators in
Hilbert spaces there exists a sequence of smooth functions $\{\mathcal{W}_{k}\}$ (and
their regularity is depending on the smoothness of the bounded domain $\Omega$) and an
increasing sequence of positive numbers $\{\lambda_{k}\}$ such that
\begin{equation*}
  A\mathcal{W}_k=\lambda_k\mathcal{W}_k\qquad \text{and}\qquad 
  \int_\Omega\mathcal{W}_k\cdot \mathcal{W}_j\,dx=\delta_{k j}.
\end{equation*}
Since each one the functions $\mathcal{W}_k$ solves the following Stokes system
$A\mathcal{W}_k=\lambda_k \mathcal{W}_k$, then it follows by an integration by parts that
\begin{equation*}
  \int_\Omega\nabla\mathcal{W}_k:
  \nabla \mathcal{W}_j\,dx=0\qquad\text{for }k\not=j,
\end{equation*}
hence also the $V$-orthogonality of the family $\{\mathcal{W}_k\}_{k\in\N}$.

We consider now the usual decomposition by eigenfunctions associated with low and high
frequencies
\begin{equation*}
  \bu=\by+\bz:=\sum_{k=1}^m c_k \mathcal{W}_k+\sum_{k=m+1}^\infty\  c_k
  \mathcal{W}_k=\mathbf{P}_m\,\bu+\mathbf{Q}_m\,\bu, 
\end{equation*}
where $\mathbf{P}_m$ is the projection over the subspace generated by
$\{\mathcal{W}_k\}_{k=1,\dots,m}$. Our main result is based on a standard result about the
projector $\mathbf{P}_m$, that can be found in~\cite[Appendix~A.4,
Theorem~4.11]{MNRR1996}:
\begin{theorem} 
  \label{thm:thm_proj} 
  The projector $\mathbf{P}_m$ can be defined as a continuous endomorphism over $V$, $H$
  and $V'$, and one has
  \begin{equation*}
    \| \mathbf{P}_m \|_{ \mathcal{L} (V,V)} \le 1, \qquad \| \mathbf{P}_m \|_{ \mathcal{L} (H,H)} \le
    1, \qquad \|  \mathbf{P}_m \|_{ \mathcal{L} (V',V')} \le 1. 
  \end{equation*}
\end{theorem}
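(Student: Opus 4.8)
The plan is to exploit the twofold orthogonality of the Stokes eigenfunctions, which is already recorded in the excerpt: the family $\{\mathcal{W}_k\}$ is orthonormal in $H$, and since $A\mathcal{W}_k=\lambda_k\mathcal{W}_k$ an integration by parts gives $\int_\Omega\nabla\mathcal{W}_k:\nabla\mathcal{W}_j\,dx=\lambda_k\,\delta_{kj}$, so the gradients are $V$-orthogonal. On the smooth common core $V_m=\mathrm{span}\{\mathcal{W}_1,\dots,\mathcal{W}_m\}\subset\mathcal{V}$ the operator acts by $\mathbf{P}_m\bu=\sum_{k=1}^m(\bu,\mathcal{W}_k)\,\mathcal{W}_k$, and I would establish the three bounds separately, extending by the density of $\mathcal{V}$ where necessary.

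First I would treat $H$. As $(\mathcal{W}_k,\mathcal{W}_j)=\delta_{kj}$, the map $\mathbf{P}_m$ is the $H$-orthogonal projection onto $V_m$, so Bessel's inequality yields $\|\mathbf{P}_m\bu\|^2=\sum_{k=1}^m|(\bu,\mathcal{W}_k)|^2\le\|\bu\|^2$, hence $\|\mathbf{P}_m\|_{\mathcal{L}(H,H)}\le 1$. For $V$, equipped with the norm $\|\nabla\cdot\|$, the gradient orthogonality shows that $\{\lambda_k^{-1/2}\mathcal{W}_k\}$ is $V$-orthonormal and that $\mathbf{P}_m$ is likewise the $V$-orthogonal projection onto $V_m$; writing $\bu=\sum_k c_k\mathcal{W}_k\in V$, I would compute $\|\nabla\mathbf{P}_m\bu\|^2=\sum_{k=1}^m\lambda_k c_k^2\le\sum_{k=1}^\infty\lambda_k c_k^2=\|\nabla\bu\|^2$, giving $\|\mathbf{P}_m\|_{\mathcal{L}(V,V)}\le 1$.

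The $V'$ bound I would obtain by duality from the $V$ bound. Since $\mathbf{P}_m$ is self-adjoint for the $H$-pairing, it extends to $V'$ through $\langle\mathbf{P}_m f,\bv\rangle:=\langle f,\mathbf{P}_m\bv\rangle$ for $f\in V'$ and $\bv\in V$, and then $\|\mathbf{P}_m f\|_{V'}=\sup_{\|\bv\|_V\le 1}|\langle f,\mathbf{P}_m\bv\rangle|\le\|f\|_{V'}\,\sup_{\|\bv\|_V\le 1}\|\mathbf{P}_m\bv\|_V\le\|f\|_{V'}$, so $\|\mathbf{P}_m\|_{\mathcal{L}(V',V')}\le 1$.

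I expect the only real care to lie not in the estimates, which are short, but in the bookkeeping that makes the three maps one and the same operator: one must check that the formulas agree on the dense subspace $\mathcal{V}$, that the $H$-pairing used in the duality step is compatible with the identification $H\hookrightarrow V'$ of the Gelfand triple $V\hookrightarrow H\hookrightarrow V'$, and that the $V$-norm is taken to be $\|\nabla\cdot\|$ so that the gradient orthogonality produces the sharp constant $1$ rather than merely a finite bound.
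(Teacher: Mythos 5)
Your proof is correct, and it is essentially the standard argument: the paper itself does not prove Theorem~\ref{thm:thm_proj} but cites~\cite[Appendix~A.4, Theorem~4.11]{MNRR1996}, where the bounds follow, exactly as you argue, from the $H$- and $V$-orthogonality of the Stokes eigenfunctions (Bessel's inequality in each norm) together with the self-adjoint duality extension to $V'$. Your closing remarks on the compatibility checks (agreement of the three definitions on a dense core, the Gelfand triple identification $V\hookrightarrow H\hookrightarrow V'$, and taking $\|\nabla\cdot\|$ as the $V$-norm) address precisely the points where care is needed, so nothing is missing.
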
 
The result is mainly based on the regularity of solutions of elliptic equations, and
thanks to this fact, it is possible to decompose the equations for the velocity, which
yields,
\begin{equation*}
  \begin{aligned}
    & -\nu\int_\Omega \mathbf{P}_m (\Delta\bu)\cdot \by\,dx=-\nu\int_\Omega \Delta\by\cdot
    \by\,dx
    = \nu\,\|\nabla\by\|^2,
    \\
    &-\nu\int_\Omega \mathbf{Q}_m\, (\Delta\bu)\cdot \bz\,dx=-\nu\int_\Omega
    \Delta\bz\cdot \bz\,dx=
    \nu\,\|\nabla\bz\|^2,
  \end{aligned}
\end{equation*}
since $\mathbf{P}_m\Delta\bu=\Delta \mathbf{P}_m\,\bu=\Delta\by$ and also
$\mathbf{Q}_m (\Delta\bu)=\Delta\mathbf{Q}_m\, \bu=\Delta\bz$.

Thus, we directly obtain the system
\begin{equation}
  \label{eq:splitting-spectral}
  \begin{aligned}
    &\frac{1}{2}\frac{d }{dt}\|\by\|^2+\nu \|\nabla
    \by\|^2+(B(\by+\bz,\by+\bz),\by)=(\bff,\by),
    \\
    & \frac{1}{2}\frac{d }{dt}\|\bz\|^2+\nu \|\nabla
    \bz\|^2+(B(\by+\bz,\by+\bz),\bz)\leq\mathbf{0},
  \end{aligned}
\end{equation}
which is more tractable than~\eqref{eq:splitting_POD} from many points of view.  In
particular, the equations in~\eqref{eq:splitting-spectral}, which can be rewritten also as 
\begin{equation*}
  \begin{aligned}
    & \frac{1}{2}\frac{d }{dt}\E(\by)+\nu \|\nabla \by\|^2 =
    -\mathrm{e}_m(\bu)+(\bff,\by),
    \\
    & \frac{1}{2}\frac{d }{dt}\E(\bz)+\nu \|\nabla \bz\|^2 \leq \mathrm{e}_m(\bu),
  \end{aligned}
\end{equation*}
do not contain the term $\mathcal{E}_{m}(\bu)$.
%
%
%
%
%
%
%
%
%
\section{Preliminaries on long-time averages}
	\label{sec:time-averages}
Since we consider long-time averages for the NSE, we must consider solutions which
are global-in-time (defined for all positive times). Due to the well-known open problems
related to the NSE, this forces us to restrict ourselves to Leray-Hopf weak solutions~\cite{CF1988, JLL69}. By
using a then natural setting we take the initial datum $\bu(0)$ in $H$.
The classical Leray-Hopf result of existence (but not uniqueness) of a global weak
solution $\bu$ to the NSE holds when $\bff\in V'$, and the velocity $\bu$ satisfies
\begin{equation*}
  \bu \in L^2(\R_+; V) \cap L^\infty(\R_+; H).
\end{equation*}
Notice that consider in this paper the case where $\bff$ is time-independent, for
simplicity. However, the following results can be extended to the case where $\bff =
\bff(t) $ is time dependent, for $\bff$ belonging to a suitable class (see~\cite{BL2018}).

In order to properly set what we mean by long-time-averaging, let $\psi:\
\R^+\times\Omega\to\R^N$ be any tensor field related to a given turbulent flow ($N$ being
its order). The time-average over a time interval $[0,t]$ is defined by
\begin{equation}
  \label{eq:time-filtering}
  M_{t}(\psi )(x) :=\frac{1}{t}\int_0^t \psi (s,x)\,ds\qquad \text{for }t>0. 
\end{equation}
According to the standard turbulence modeling process, we then apply the averaging
operator $M_t$ to NSE~\eqref{eq:NSE} and also to~\eqref{eq:system}, to study the limits
when $t\to+\infty$. We recall that the long-times averages represent one of the few
observable and computable quantities associated to a highly variable turbulent flow.  We
will adopt the following standard notation for the long-time average of any field $\psi$
\begin{equation*}
  \mean{\psi}(x) := \lim_{t\to+\infty} M_t( \psi) (x),
\end{equation*} 
whenever the limit exists. (Without too much restrictions we can suppose that the limits
we write do exist, at least after extracting sub-sequences leaving the mathematical
difficulties, which can be treated with generalized Banach limits, for a more general and
abstract framework). Within this theory we can decompose the velocity as follows
\begin{equation*}
  \bu=\overline{\bu}+\bu',
\end{equation*}
where $\bu'$ represent the so called turbulent fluctuations.
We recall that time-averaging has been introduced by O.~Reynolds~\cite{Rey1895}, at least
for large values of $t$, and the ideas have been widely developed by
L.~Prandtl~\cite{Pra1925} in the case of turbulent channel flows.  The same ideas have
been also later considered in the case of fully developed homogeneous and isotropic
turbulence, such as grid-generated turbulence. In this case the velocity field is
postulated as oscillating around a mean smoother steady state, see for instance
G.-K.~Batchelor~\cite{Bat1953}. For further details on the role of time averaging in
turbulence, after the work of Stokes and Reynolds, we can recall a few recent papers and
books~\cite{BIL2006,BL2018,CL2014,Foi197273, JL2016,Lay2014,Lew2015}, where aspects of computation
and modeling are studied.  

We now observe that, by taking the time-averages of the NSE we have the following
estimates, see~\cite[Prop.~2.1]{Lew2015}
\begin{equation}
  \label{eq:long-time-estimates}  
  \begin{aligned}
    \|\bu(t)\|^2&\leq \|\bu(0)\|^2\,\e^{-\nu\, C_P t}+\frac{\|\bff\|^2}{\nu^2\,
      C_{P}}\big(1-\e^{-\nu\,C_P t}\big),\qquad \forall\,t>0
    \\
    \frac{1}{t}\int_0^t\|\nabla
    \bu(s)\|^2\,ds&\leq\frac{\|\bff\|^2}{\nu^2}+\frac{\|\bu(0)\|^2}{\nu t},\qquad
    \forall\,t>0, 
  \end{aligned}
\end{equation}
where $C_P$ is the best constant in the Poincar\'e inequality 
\begin{equation*}
  C_P\|\bu\|^2\leq\|\nabla \bu\|^2\qquad \forall\,\bu\in H^1_0(\Omega).
\end{equation*}
The above inequalities show that both $\|\bu(t)\|^2$ and
$\frac{1}{t}\int_0^t\|\nabla\bu(s)\|^2\,ds$ are uniformly bounded, hence we have the
following result
\begin{theorem}[cf.~\cite{BL2018,Lew2015}]
  \label{thm:main_theorem}
  Let $\bu(0) \in H$, $\bff\in V'$, and let $\bu$ be a
  global-in-time weak solution to the NSE~\eqref{eq:NSE}.  Then, there exist
\begin{enumerate}
\item a sequence $\{t_n\}_{n\in\N}$ such that $\lim\limits_{n \to \infty }t_n=+\infty$;
\item a vector field $\mean{\bu}\in V$;
\item a vector field $\bB\in L^{3/2}(\Omega)^3$;
\item a second order tensor field $\reyn \in L^3(\Omega)^9$;
\end{enumerate} 
 such that it holds:
\begin{enumerate}
\item[i)] When $n \to \infty$,
    \begin{equation*}
      \begin{aligned}
        &M_{t_n}(\bu)\rightharpoonup
        \mean{\bu}\qquad\text{weakly in }V,
        \\
        & M_{t_{n}}\big ((\bu\cdot\nabla)\,\bu\big )\rightharpoonup\bB
        \qquad \text{weakly in } L^{3/2}(\Omega)^3,
        \\
        &M_{t_n}(\bu{'} \otimes \bu{'} )\rightharpoonup
        \reyn\qquad\text{weakly in }L^3(\Omega)^9;
      \end{aligned}
    \end{equation*}
  \item[ii)] The Reynolds averaged equations:
\begin{equation}
  \label{eq:R}
  \left\{
    \begin{aligned}
     (\mean{\bu}\cdot\nabla)\, \mean{\bu} -\nu\Delta \mean{\bu}+\nabla
      \mean{p}+\nabla\cdot\reyn&=\mean{\bff}\qquad &\text{ in }\Omega,
      \\
      \nabla\cdot \mean{\bu}&=0\qquad &\text{ in }\Omega,
      \\
      \mean{\bu}&=\mathbf{0}\qquad &\text{ on }\Gamma,
    \end{aligned}  
  \right.
\end{equation}
 hold true in the weak sense;
\item[iii)] The equality $\bF = \bB - (\mean{\bu} \cdot \nabla )\, \mean{\bu} = \nabla
  \cdot \reyn$ is valid in $\mathcal{D}'(\Omega)$;
  \item[iv)] The following energy balance (equality) holds true
  \begin{equation*}
    \nu\,\|\nabla\mean{\bu}\|^2+(\nabla \cdot \reyn, \mean{\bu} ) = \,
    <\mean{\bff},\mean{\bu} >; 
  \end{equation*}
\item[v)] The tensor $\reyn$ is dissipative in average or, more precisely, the following
  inequality
\begin{equation}
  \label{eq:reyn_dissipative} 
  \epsilon:=\nu\,\mean{\|\nabla{\bu}'\|^2}\leq\int_\Omega(\nabla\cdot\reyn)\cdot\mean{\bu}\,dx,
\end{equation}
holds true.
\end{enumerate}
\end{theorem}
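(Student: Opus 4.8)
The plan is to realize $\mean{\bu}$, $\bB$ and $\reyn$ as weak limits of the time-averages of $\bu$, of the convective term, and of $\bu\otimes\bu$, and then to pass to the limit in the averaged momentum and energy balances. First I would establish the compactness behind i). The two a priori bounds in \eqref{eq:long-time-estimates} show that $t\mapsto M_t(\bu)$ is bounded in $V$, since by Jensen's inequality $\|\nabla M_t(\bu)\|^2\le M_t(\|\nabla\bu\|^2)$, which is uniformly bounded. Reflexivity of $V$ yields a sequence $t_n\to+\infty$ with $M_{t_n}(\bu)\rightharpoonup\mean{\bu}$ weakly in $V$; as $\Omega$ is bounded the embedding $V\hookrightarrow H$ is compact, so in addition $M_{t_n}(\bu)\to\mean{\bu}$ strongly in $H$, and $\mean{\bu}\in V$ is divergence-free with vanishing trace. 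For the convective term, Hölder and $H^1_0\hookrightarrow L^6$ give the pointwise-in-time bound $\|(\bu\cdot\nabla)\bu\|_{L^{3/2}}\le\|\bu\|_{L^6}\|\nabla\bu\|\le C\|\nabla\bu\|^2$, whence $M_t((\bu\cdot\nabla)\bu)$ is bounded in $L^{3/2}(\Omega)^3$; likewise $\|\bu\otimes\bu\|_{L^3}\le C\|\nabla\bu\|^2$ shows $M_t(\bu\otimes\bu)$ is bounded in $L^3(\Omega)^9$. Extracting further along a common sequence (still denoted $t_n$) produces $\bB$ and a limit $\mean{\bu\otimes\bu}$; expanding $\bu'\otimes\bu'$ with $\bu'=\bu-\mean{\bu}$ and using the strong $H$-convergence of $M_{t_n}(\bu)$ to kill the cross terms gives $M_{t_n}(\bu'\otimes\bu')\rightharpoonup\reyn:=\mean{\bu\otimes\bu}-\mean{\bu}\otimes\mean{\bu}$ in $L^3$, which is i).

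For ii) and iii) I would apply $M_t$ to the functional form of the NSE. The only genuinely time-dependent contribution is $M_t(\partial_t\bu)=(\bu(t)-\bu(0))/t$, which tends to $0$ in $H$ because $\|\bu(t)\|$ is uniformly bounded, while the forcing is unchanged since $\bff$ is stationary. Using $(\bu\cdot\nabla)\bu=\nabla\cdot(\bu\otimes\bu)$ for divergence-free $\bu$ and continuity of $\nabla\cdot$ on $\mathcal{D}'$, I obtain $\bB=\nabla\cdot\mean{\bu\otimes\bu}$, and similarly $(\mean{\bu}\cdot\nabla)\mean{\bu}=\nabla\cdot(\mean{\bu}\otimes\mean{\bu})$. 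Subtracting gives $\bF=\bB-(\mean{\bu}\cdot\nabla)\mean{\bu}=\nabla\cdot\reyn$ in $\mathcal{D}'(\Omega)$, which is iii); substituting this into the limit momentum equation $-\nu\Delta\mean{\bu}+\bB+\nabla\mean{p}=\bff$ yields the Reynolds system \eqref{eq:R}, establishing ii).

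For iv) I would test \eqref{eq:R} against $\mean{\bu}\in V$: the convective term vanishes by skew-symmetry and the pressure drops because $\nabla\cdot\mean{\bu}=0$, leaving $\nu\|\nabla\mean{\bu}\|^2+(\nabla\cdot\reyn,\mean{\bu})=\langle\mean{\bff},\mean{\bu}\rangle$. For the dissipativity v) I would start from the Leray--Hopf energy inequality, divide by $t$, and average: the boundary terms $\|\bu(t)\|^2/2t$ and $\|\bu(0)\|^2/2t$ vanish along $t_n$, and $M_{t_n}(\langle\bff,\bu\rangle)=\langle\bff,M_{t_n}(\bu)\rangle\to\langle\mean{\bff},\mean{\bu}\rangle$, so that, after a further extraction making $M_{t_n}(\|\nabla\bu\|^2)$ converge, $\nu\mean{\|\nabla\bu\|^2}\le\langle\mean{\bff},\mean{\bu}\rangle$. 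Expanding $\|\nabla\bu\|^2=\|\nabla\mean{\bu}\|^2+2(\nabla\mean{\bu},\nabla\bu')+\|\nabla\bu'\|^2$ and averaging, the cross term tends to $0$ since $\nabla M_{t_n}(\bu')\rightharpoonup 0$ in $L^2$, yielding the Pythagorean identity $\mean{\|\nabla\bu\|^2}=\|\nabla\mean{\bu}\|^2+\mean{\|\nabla\bu'\|^2}$. Combining with iv) and cancelling $\nu\|\nabla\mean{\bu}\|^2$ gives $\epsilon=\nu\mean{\|\nabla\bu'\|^2}\le(\nabla\cdot\reyn,\mean{\bu})$, that is \eqref{eq:reyn_dissipative}.

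The main obstacle is the treatment of the nonlinear term: one must both secure enough integrability to extract $\bB$ and $\mean{\bu\otimes\bu}$ as weak limits and, more delicately, justify that the Reynolds cross terms $\mean{\bu}\otimes M_{t_n}(\bu')$ disappear in the limit, which is exactly where the compactness of $V\hookrightarrow H$ (strong $L^2$-convergence of the running mean) is indispensable for identifying $\reyn$. The second conceptually important point is that the dissipativity in v) is an \emph{inequality} precisely because only the Leray--Hopf energy inequality, and not an equality, is available for the weak solutions under consideration.
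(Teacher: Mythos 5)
Your proposal is correct and follows essentially the same route as the paper, which defers the proof of Theorem~\ref{thm:main_theorem} to~\cite{BL2018,Lew2015} and reuses the identical scheme in its own proofs of Theorems~\ref{thm:newtheorem} and~\ref{thm:newtheorem2}: the uniform bounds~\eqref{eq:long-time-estimates} give weak compactness of the time averages $M_t(\bu)$, $M_t((\bu\cdot\nabla)\bu)$ and $M_t(\bu\otimes\bu)$, the averaged momentum equation passes to the limit using $M_t(\partial_t\bu)=(\bu(t)-\bu(0))/t\to0$ with the pressure recovered by De Rham, the balance in iv) comes from testing with $\mean{\bu}\in V$, and v) follows from the Leray--Hopf energy inequality combined with the orthogonality relation~\eqref{eq:orthogonality} (cited in the paper from~\cite[Lemma~4.4]{BL2018}, and which you correctly rederive via the vanishing cross term $(\nabla\mean{\bu},\nabla M_{t_n}(\bu'))\to0$). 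Your closing observation that v) is an inequality precisely because only the energy \emph{inequality} is available for Leray--Hopf weak solutions also matches the paper's own discussion.
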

It is important to observe that the long-time limit is characterized by the solution of
the system~\eqref{eq:R}, which is similar to the Navier-Stokes equations, but which
contains an extra term, coming from the effect of fluctuations, which has the mean effect
of increasing the dissipation.

We observe that this is related to the long-time behavior of solutions close to statistical
equilibrium. The study of the long-time behavior dates back to pioneering works of Foias
and Prodi on deterministic statistical solutions, see for instance~\cite{FMRT2001a}. Their
interest is mainly devoted to finding measure in the space of initial data to be connected
with the long-time limits.  Here, we follow a slightly different path, as
in~\cite{BL2018,Lew2015}, in order to characterize in a less technical way the long-time
behavior, without resorting to any ergodic-type result and also with the perspective that
long time averages are computable or at least can be approximated in a clear way.
\section{Average transfer of energy at equilibrium}
	\label{sec:average-energy}
Our goal is now to characterize in some sense the energy transfer between the two
functions $\by$, $\bz$ of the expansion and to determine --if possible-- the sign of
$\mathrm{e}_m(\bu)$, at least in an average sense. 

The point concerning the exchange of energy between low and high modes is in the same
spirit as the results recalled in Foias, Manley, Rosa, and Temam~\cite[Chap.~5]{FMRT2001a}
and follows from results obtained in a more heuristic way by Kolmogorov~\cite{Kol1941}.

We first observe that the $L^2$-orthogonality of the POD decomposition implies that
\begin{equation*}
  \|\bu\|^2=\|\by+\bz\|^2=\|\by\|^2+\|\bz\|^2.
\end{equation*} 
Hence, from the uniform $L^2$-bound on $\bu$ it follows that both $\by$ and $\bz$ are
uniformly bounded in time. From this observation we can deduce the following result,
reminding that $ \mathrm{e}_m$ and and $\mathcal{E}_{m}$ are defined by
equations~\eqref{eq:em} and~\eqref{eq:Em}, and $M_t$ is defined by
equation~\eqref{eq:time-filtering}.
\begin{theorem}
  \label{thm:thm-ROM}
  There exist a sequence $\{t_n\}$ such that $t_n\to+\infty$ and a field $\mean{\bz}\in H$
  such that
  \begin{equation}
    \label{eq:weak_cv_z_t}
    {\bf Z}_{t_n} = M_{t_n}(\bz)\rightharpoonup
    \mean{\bz}\qquad\text{weakly in }H,
      \end{equation}
      and 
      \begin{equation} 
        \label{eq:energy_sign}
        \liminf_{n\to+\infty}M_{t_{n}}( \mathrm{e}_m(\bu)+\mathcal{E}_{m}(\bu)) \geq0.
      \end{equation}
\end{theorem}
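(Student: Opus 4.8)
The plan is to extract both conclusions of the theorem from the second relation in~\eqref{eqn:energy-equation-pod-concise}, combined with the \emph{a priori} bounds inherited from the Leray--Hopf class $\bu\in L^\infty(\R_+;H)\cap L^2(\R_+;V)$. The only real inputs are these uniform bounds and the elementary fact that the long-time average annihilates boundary terms at rate $O(1/t)$; no ergodic or statistical hypothesis is needed.

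First I would establish the weak convergence~\eqref{eq:weak_cv_z_t}. Because the POD basis is $L^2$-orthonormal, $\|\bz(s)\|\le\|\bu(s)\|$, so the $L^\infty(\R_+;H)$ bound on $\bu$ gives $\sup_{s>0}\|\bz(s)\|\le C$. Consequently
\begin{equation*}
  \|M_t(\bz)\|\le\frac1t\int_0^t\|\bz(s)\|\,ds\le C\qquad\text{for all }t>0 ,
\end{equation*}
so the family $\{M_t(\bz)\}_{t>0}$ is bounded in the Hilbert space $H$. Since $H$ is reflexive (indeed separable Hilbert), bounded sets are weakly sequentially compact, and I would extract a sequence $t_n\to+\infty$ and a limit $\mean{\bz}\in H$ with $M_{t_n}(\bz)\rightharpoonup\mean{\bz}$ weakly in $H$.

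Next I would prove~\eqref{eq:energy_sign} along \emph{any} sequence tending to infinity, so that in particular it holds along the $t_n$ just chosen. Rearranging the second line of~\eqref{eqn:energy-equation-pod-concise} yields the pointwise-in-time inequality $\mathcal{E}_{m}(\bu)+\mathrm{e}_m(\bu)\ge \tfrac12\frac{d}{dt}\E(\bz)+\nu\|\nabla\bz\|^2$. Applying the positive operator $M_t$ and integrating the total derivative,
\begin{equation*}
  M_t\big(\mathcal{E}_{m}(\bu)+\mathrm{e}_m(\bu)\big)\ \ge\ \frac{\E(\bz(t))-\E(\bz(0))}{2t}+\nu\,M_t\big(\|\nabla\bz\|^2\big) .
\end{equation*}
The second term on the right is nonnegative, and since $\E(\bz(t))=\tfrac12\|\bz(t)\|^2$ is uniformly bounded by the previous step, the first term is bounded below by $-C/t$. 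Hence $M_t(\mathcal{E}_{m}(\bu)+\mathrm{e}_m(\bu))\ge -C/t$ for every $t>0$, and taking the $\liminf$ as $t\to+\infty$ (in particular along $t_n$) gives~\eqref{eq:energy_sign}. The validity of the inequality for \emph{every} $t$ is exactly what lets a single sequence $t_n$ serve both conclusions.

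The point requiring care is not the averaging argument above but the rigorous meaning of the second relation in~\eqref{eqn:energy-equation-pod-concise} for Leray--Hopf solutions, together with the integrability of $\mathcal{E}_{m}(\bu)+\mathrm{e}_m(\bu)$ needed for $M_t$ to be defined. The inequality is obtained in its integrated form through the mollification/double-truncation procedure sketched after~\eqref{eqn:energy-equation-pod}. For the integrability one uses that $\by=\mathbf{P}_m\bu$ lives in the finite-dimensional space $V_m$, where all norms are equivalent: $B(\by,\by)$ is then bounded in $H$, and after the integration by parts $(B(\bz,\bz),\by)=-(B(\bz,\by),\bz)$ one controls this term by $\|\nabla\by\|_{L^\infty}\|\bz\|^2\le C$, so that both terms defining $\mathrm{e}_m(\bu)$ are uniformly bounded in time, while $|\mathcal{E}_{m}(\bu)|\le\nu\|\nabla\by\|\,\|\nabla\bz\|\in L^1_{loc}(\R_+)$ by the $L^2(\R_+;V)$ bound. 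With these technical points in hand, the averaging argument closes the proof.
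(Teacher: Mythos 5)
Your proof is correct and follows essentially the same route as the paper: the uniform $L^2$ bound on $\bz$ gives weak sequential compactness of $M_t(\bz)$ in $H$, and averaging the second inequality of~\eqref{eqn:energy-equation-pod-concise} so the boundary terms decay like $O(1/t)$ while the nonnegative term $\nu M_t(\|\nabla\bz\|^2)$ is dropped yields~\eqref{eq:energy_sign}, exactly as in the paper's argument via~\eqref{eq:mean_ener}. Your closing remarks on the rigorous derivation of the energy inequality and the integrability of $\mathrm{e}_m(\bu)+\mathcal{E}_m(\bu)$ address the same technical point the paper defers to the mollification discussion after~\eqref{eqn:energy-equation-pod}, so nothing essential is missing.
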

\begin{proof} 
  Let us observe first that by the energy inequality~\eqref{eq:long-time-estimates}, we
  easily deduce that $(M_t (\bz))_{t>0}$ is bounded in $H$, hence the first assertion of
  the statement and~\eqref{eq:weak_cv_z_t}. We next prove~\eqref{eq:energy_sign}.  To do
  so, we average with respect to time with the operator $M_t$ the balance
  equation~\eqref{eqn:energy-equation-pod-concise} for $\E(\bz)$, which yields 
  \begin{equation}
    \label{eq:mean_ener}
    \frac{1}{2t}\|\bz(t)\|^2-\frac{1}{2t}\|\bz(0)\|^2+\nu M_t (\|\nabla\bz\|^2)\leq
    M_t( \mathrm{e}_m(\bu)+\mathcal{E}_{m}(\bu)). 
  \end{equation}
  By using the energy inequality~\eqref{eq:long-time-estimates} once again, we see that
  the first two terms vanish as $t\to+\infty$ and also that $M_t (\|\nabla \bz\|^2)$ is
  bounded. Therefore,~\eqref{eq:mean_ener} yields
  \begin{equation*}
  0 \le   \nu \liminf_{n \to +\infty}  M_{t_n} (\|\nabla
    \bz\|^2) \le  \liminf_{n\to+\infty}M_{t_n} \mathrm{e}_m(\bu)+\mathcal{E}_{m}(\bu)),
  \end{equation*}
  hence~\eqref{eq:energy_sign}. We observe that in this case we do not have any direct
  estimation on the behavior of the $H^1$-norm.
\end{proof}
In the case we can assume that the limit exists, we also have the following result.
\begin{corollary}
  Let us assume the limit of $M_{t_n}(\mathrm{e}_m(\bu))$ for $n\to+\infty$ exists, and
  that 
  \begin{equation*}
    \liminf_{T\to+\infty}\frac{\nu}{T}\int_0^T(\nabla\bz(s),\nabla\by(s))\,ds = \liminf_{t
      \to \infty} \mathcal{E}_{m}(\bu) \geq0. 
  \end{equation*}
Then, it follows 
  \begin{equation*}
    \overline{\mathrm{e}_m(\bu)}=\lim_{n\to+\infty}\frac{1}{t_n}\int_0^{t_n}\e_m(\bu(s))\,ds\geq0.  
  \end{equation*}
\end{corollary}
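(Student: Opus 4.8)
The plan is to read off the sign of $\overline{\mathrm{e}_m(\bu)}$ directly from the inequality~\eqref{eq:energy_sign} already secured in Theorem~\ref{thm:thm-ROM}, using the two additional hypotheses only to separate the transfer term $\mathrm{e}_m(\bu)$ from the viscous cross term $\mathcal{E}_m(\bu)$. No new energy estimate is required: the argument is pure bookkeeping of $\liminf$'s along the sequence $\{t_n\}$ furnished by Theorems~\ref{thm:main_theorem} and~\ref{thm:thm-ROM}.

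First I would abbreviate $a_n := M_{t_n}(\mathrm{e}_m(\bu))$ and $b_n := M_{t_n}(\mathcal{E}_m(\bu))$, so that~\eqref{eq:energy_sign} reads $\liminf_{n\to\infty}(a_n+b_n)\geq 0$. By the standing assumption the sequence $a_n$ converges, with $\lim_n a_n = \overline{\mathrm{e}_m(\bu)}$. I would then invoke the elementary fact that adding a convergent sequence commutes with the $\liminf$, namely
\begin{equation*}
  \liminf_{n\to\infty}(a_n+b_n) = \overline{\mathrm{e}_m(\bu)} + \liminf_{n\to\infty} b_n,
\end{equation*}
which, combined with~\eqref{eq:energy_sign}, gives $\overline{\mathrm{e}_m(\bu)} \geq -\liminf_{n\to\infty} b_n$.

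It then remains to show that the hypothesis on the gradient cross term forces $\liminf_n b_n \leq 0$. Recalling the definition~\eqref{eq:Em}, one has $b_n = -\frac{\nu}{t_n}\int_0^{t_n}(\nabla\by(s),\nabla\bz(s))\,ds$, so that $\liminf_n b_n = -\limsup_n \frac{\nu}{t_n}\int_0^{t_n}(\nabla\by,\nabla\bz)\,ds$. Since a subsequential $\liminf$ dominates the full-time $\liminf$, the assumption $\liminf_{T\to\infty}\frac{\nu}{T}\int_0^T(\nabla\bz,\nabla\by)\,ds \geq 0$ yields $\limsup_n \frac{\nu}{t_n}\int_0^{t_n}(\nabla\by,\nabla\bz)\,ds \geq \liminf_n \frac{\nu}{t_n}\int_0^{t_n}(\nabla\by,\nabla\bz)\,ds \geq 0$, whence $\liminf_n b_n \leq 0$ and therefore $\overline{\mathrm{e}_m(\bu)} \geq -\liminf_n b_n \geq 0$, as claimed.

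The hard part is not analytical but lies entirely in correctly tracking signs and in the legitimacy of splitting one $\liminf$ of a sum into two pieces: this split is valid precisely because the corollary assumes the limit of $M_{t_n}(\mathrm{e}_m(\bu))$ exists, and it must be combined with the sign convention $\mathcal{E}_m(\bu) = -\nu(\nabla\by,\nabla\bz)$ of~\eqref{eq:Em} so that the non-negativity hypothesis on the mean of $(\nabla\bz,\nabla\by)$ translates into the correct upper bound $\liminf_n b_n \leq 0$. I would also stress that the sequence $\{t_n\}$ is fixed once and for all at the weak-convergence step, so that all $\liminf$'s are taken along it, and the passage from the full-time family indexed by $T$ to this subsequence only relaxes the lower bound, which is exactly the direction one needs.
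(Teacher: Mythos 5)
Your proof is correct and takes essentially the route the paper intends: the corollary is stated without a separate proof because it is an immediate consequence of inequality~\eqref{eq:energy_sign} in Theorem~\ref{thm:thm-ROM}, obtained exactly as you do by splitting the $\liminf$ of the sum (legitimate since $M_{t_n}(\mathrm{e}_m(\bu))$ is assumed convergent) and using the hypothesis on the gradient cross term along the fixed subsequence $\{t_n\}$. You also handled the one delicate point correctly, namely the sign convention $\mathcal{E}_m(\bu)=-\nu(\nabla\by,\nabla\bz)$ from~\eqref{eq:Em}: reading the hypothesis as non-negativity of $\liminf_{T\to\infty}\frac{\nu}{T}\int_0^T(\nabla\bz,\nabla\by)\,ds$ (rather than of $\liminf M_t(\mathcal{E}_m(\bu))$, with which the paper's displayed equality is inconsistent by a sign) is precisely what yields $\liminf_n M_{t_n}(\mathcal{E}_m(\bu))\le 0$ and hence $\overline{\mathrm{e}_m(\bu)}\ge 0$.
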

This result can be interpreted as that, beyond the range of injection of energy, the
average net transfer of energy occurs only into the small scales.  This occurs if the term
of interaction between gradients of large and small scales is negligible, in the limit of
long times. This latter assumption is not proved rigorously, but we will see it is
satisfied in the numerical tests, with a good degree of approximation (see
Section~\ref{sec:numerical-results}). However, when one uses the eigen-vectors of the
Stokes operator as POD basis, this is automatically satisfied since this basis is also
orthogonal for the $H^1$-scalar product, so that in this case $ \mathcal{E}_{m}(\bu)= 0$. 
\subsection{The spectral case}
The results of the previous section can be made much more precise in the case of
decomposition made by a spectral basis of eigenfunctions of the Stokes operator.  We
present the results, which are in some sense new and not fully completely included
in~\cite{FMRT2001a}, in the sense of time-averaging. This procedure is applied to $\bu$,
which is a weak solution of the Navier-Stokes equations, satisfying the uniform
estimates~\eqref{eq:long-time-estimates}. In this way, the orthogonality (in both $H$ and
$V$) of the basis implies that
\begin{equation*}
  \|\bu\|^2=\|\by\|^2+\|\bz\|^2\qquad\text{and}\qquad   \|\nabla \bu\|^2=\|\nabla
  \by\|^2+\|\nabla \bz\|^2. 
\end{equation*}

The results in this case are more precise than those from Theorem~\ref{thm:thm-ROM}, since
we have at disposal a larger set of a priori estimates and also the set of
equations~\eqref{eq:splitting-spectral} has a better structure than~\eqref{eq:splitting_POD}.

We now prove the following results in the case of a decomposition of the velocity into
small and large frequencies. The first one aims at taking the time average and then let
$t$ go to infinity in the equations~\eqref{eq:splitting_POD} satisfied by $\by$ and
$\bz$. The second one aims at comparing the amount of turbulent dissipation of small and
large frequencies with respect to the total work of the corresponding Reynolds stresses
$\reyny$ and $\reynz$.
\begin{theorem}
  \label{thm:newtheorem}
  Let $\bu(0) \in H$, $\bff\in \mathbf{P}_m\, H$, and let $\bu$ be a global-in-time
  weak solution to the NSE~\eqref{eq:NSE}.  Then, there exist
  \begin{enumerate}
  \item a sequence $\{t_n\}_{n\in\N}$ such that $\lim\limits_{n \to \infty }t_n=+\infty$;
  \item vector fields $\mean{\by},\mean{\bz}\in V$;
  \item  vector fields $\bB_1,\bB_2\in V'$;
  \end{enumerate} 
 such that it holds:
 \begin{enumerate}
 \item[i)] When $n \to \infty$,
   \begin{equation*}
     \begin{aligned}
       \begin{aligned}
         &M_{t_n}(\by)\rightharpoonup \mean{\by}\qquad\text{weakly in }V,
         \\
         & M_{t_{n}}\big ((\by\cdot\nabla)\,\by\big )\rightharpoonup\bB_1 \qquad \text{weakly in }V',
       \end{aligned}&\hspace{1.5cm} \begin{aligned} &M_{t_n}(\bz)\rightharpoonup
         \mean{\bz}\qquad\text{weakly in }V,
         \\
         &M_{t_{n}}\big
         ((\bz\cdot\nabla)\,\bz\big )\rightharpoonup\bB_2 \qquad 
         \text{weakly in } V',
       \end{aligned}
     \end{aligned}
   \end{equation*}
 \item[ii)] The Reynolds averaged equations:
   \begin{equation}
     \label{eq:Ry}
     \left\{
       \begin{aligned}
         -\nu\Delta \mean{\by}+\nabla \mean{p_\by}+\bB_1&=\mathbf{P}_m\bff\qquad &\text{ in
         }\Omega,
         \\
         \nabla\cdot \mean{\by}&=0\qquad &\text{ in }\Omega,
      \\
      \mean{\by}&=\mathbf{0}\qquad &\text{ on }\Gamma,
    \end{aligned}  
  \right.
\end{equation}
and 
\begin{equation}
  \label{eq:Rz}
  \left\{
    \begin{aligned}
      -\nu\Delta \mean{\bz}+\nabla \mean{p_\bz}+\bB_2&=\mathbf{0}\qquad &\text{ in
      }\Omega,
      \\
      \nabla\cdot \mean{\bz}&=0\qquad &\text{ in }\Omega,
      \\
      \mean{\bz}&=\mathbf{0}\qquad &\text{ on }\Gamma,
    \end{aligned}  
  \right.
\end{equation}
holds true in $V'$.
\end{enumerate}
\end{theorem}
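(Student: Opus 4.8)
The plan is to run the argument underlying Theorem~\ref{thm:main_theorem} on the \emph{pair} of equations~\eqref{eq:system}, taking full advantage of the two features of the Stokes eigenbasis $\{\mathcal{W}_k\}$ that fail for a generic POD basis: the operators $\mathbf{P}_m,\mathbf{Q}_m$ commute with $\Delta$, and the family is orthogonal in $H$ \emph{and} in $V$. First I would collect the a priori estimates. Combining~\eqref{eq:long-time-estimates} with the double orthogonality $\|\bu\|^2=\|\by\|^2+\|\bz\|^2$ and $\|\nabla\bu\|^2=\|\nabla\by\|^2+\|\nabla\bz\|^2$, one gets that $\by$ and $\bz$ are bounded in $L^\infty(\R_+;H)$ and that $M_t(\|\nabla\by\|^2)$ and $M_t(\|\nabla\bz\|^2)$ are bounded uniformly in $t$; by convexity of the norm under the averaging~\eqref{eq:time-filtering} this makes $M_t(\by)$ and $M_t(\bz)$ bounded in $V$. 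For the nonlinear terms, $\by=\mathbf{P}_m\bu$ lives in the finite-dimensional space $V_m$, so all its norms are equivalent and $(\by\cdot\nabla)\by$ is uniformly bounded, while for the genuinely infinite-dimensional part one uses $\|(\bz\cdot\nabla)\bz\|_{L^{3/2}}\le\|\bz\|_{L^6}\,\|\nabla\bz\|\le C\,\|\nabla\bz\|^2$, so that $M_t\big((\bz\cdot\nabla)\bz\big)$ is bounded in $L^{3/2}(\Omega)^3\hookrightarrow V'$.

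Since $V$ and $V'$ are reflexive, I would then extract by a diagonal procedure a single sequence $t_n\to+\infty$ along which $M_{t_n}(\by)\rightharpoonup\mean{\by}$ and $M_{t_n}(\bz)\rightharpoonup\mean{\bz}$ weakly in $V$, and along which the averaged nonlinearities converge weakly in $V'$; this last step simply \emph{defines} $\bB_1$ and $\bB_2$, exactly as in the Foias-type approach, where these averages are never computed. The constraints $\nabla\cdot\mean{\by}=\nabla\cdot\mean{\bz}=0$ and the homogeneous boundary conditions are inherited because $V$ is weakly closed (indeed $\nabla\cdot M_{t_n}(\by)=0$ for every $n$).

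Next I would apply $M_{t_n}$ to each line of~\eqref{eq:system}. The spectral identities $\mathbf{P}_m\Delta\bu=\Delta\by$ and $\mathbf{Q}_m\Delta\bu=\Delta\bz$ turn the viscous terms into $-\nu\Delta M_{t_n}(\by)\rightharpoonup-\nu\Delta\mean{\by}$ and $-\nu\Delta M_{t_n}(\bz)\rightharpoonup-\nu\Delta\mean{\bz}$ in $V'$, since $\Delta:V\to V'$ is bounded and hence weakly continuous. The averaged time derivatives vanish, $M_{t_n}(d\by/dt)=t_n^{-1}(\by(t_n)-\by(0))\to\mathbf{0}$ in $V'$ (and likewise for $\bz$), thanks to the uniform $H$-bound; the assumption $\bff\in\mathbf{P}_m H$ keeps the force in the first equation only. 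Passing to the limit leaves identities holding against every $\boldsymbol{\varphi}\in\mathcal{V}$, and the de Rham theorem recovers the gradients $\nabla\mean{p_\by}$, $\nabla\mean{p_\bz}$, yielding~\eqref{eq:Ry} and~\eqref{eq:Rz} in $V'$.

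The main obstacle is the nonlinear term, and more precisely the bookkeeping of the cross interactions. The nonlinearity entering the $\by$-equation is $\mathbf{P}_m[(\bu\cdot\nabla)\bu]$, which expands as $(\by\cdot\nabla)\by$ plus the mixed contributions $(\by\cdot\nabla)\bz$, $(\bz\cdot\nabla)\by$, $(\bz\cdot\nabla)\bz$ (and symmetrically for $\mathbf{Q}_m$ in the $\bz$-equation). One must show that each of these has a weak limit in $V'$ along $t_n$, so that the limits beyond the self-interaction can be absorbed into $\bB_1$, $\bB_2$ (or into the pressures). This is precisely where the \emph{separate} control of $\|\nabla\by\|$ and $\|\nabla\bz\|$ afforded by the $V$-orthogonality is indispensable, and it is the reason the present result is sharper than Theorem~\ref{thm:thm-ROM}: in the POD case only the sum $\|\nabla\bu\|^2$ is controlled -- not its two pieces, because of the non-orthogonality term $\nu(\nabla\by,\nabla\bz)$ in~\eqref{eq:splitting_POD} -- so there one cannot even place $\mean{\bz}$ in $V$, let alone write the momentum equations~\eqref{eq:Ry}--\eqref{eq:Rz}.
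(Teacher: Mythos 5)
Most of your architecture is sound, and in fact reproduces the natural ``direct'' proof: the separate $L^\infty(\R_+;H)$ bounds on $\by,\bz$, the uniform bounds on $M_t(\|\nabla\by\|^2)$ and $M_t(\|\nabla\bz\|^2)$ coming from the $V$-orthogonality of the eigenbasis, the finite-dimensionality argument for $(\by\cdot\nabla)\,\by$, the H\"older--Sobolev estimate $\|(\bz\cdot\nabla)\,\bz\|_{L^{3/2}}\le C\|\nabla\bz\|^2$, the vanishing of $M_{t_n}(d\by/dt)$, and the De Rham step are all correct. The genuine gap is exactly the point you flag but do not resolve: the cross interactions. You first \emph{define} $\bB_1$, $\bB_2$ as the weak limits of the self-interaction averages $M_{t_n}\big((\by\cdot\nabla)\,\by\big)$, $M_{t_n}\big((\bz\cdot\nabla)\,\bz\big)$, and then claim the limits of $M_{t_n}\big(\mathbf{P}_m[(\by\cdot\nabla)\,\bz]\big)$, $M_{t_n}\big(\mathbf{P}_m[(\bz\cdot\nabla)\,\by]\big)$, $M_{t_n}\big(\mathbf{P}_m[(\bz\cdot\nabla)\,\bz]\big)$ ``can be absorbed into $\bB_1$, $\bB_2$ (or into the pressures).'' Your a priori bounds do give weak limits for each piece along further subsequences, but nothing shows those limits vanish or are gradients: no decorrelation between $\by$ and $\bz$ is available, so they cannot be hidden in $\nabla\mean{p_\by}$, and absorbing them into $\bB_1$ contradicts the definition of $\bB_1$ you have just made. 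The $V$-orthogonality buys separate \emph{norm} control, hence compactness, but not the identification needed to write \eqref{eq:Ry} with the self-interaction limit alone.

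The paper closes this hole by never splitting the nonlinearity: it starts from Theorem~\ref{thm:main_theorem}, which already gives $M_{t_n}\big((\bu\cdot\nabla)\,\bu\big)\rightharpoonup \mathbf{B}$ in $L^{3/2}(\Omega)^3\subset V'$, and then uses Theorem~\ref{thm:thm_proj} (each $\mathbf{P}_m$ is a contraction on $V$, $H$ and $V'$, and commutes with $M_t$ and with weak limits) to set $\bB_1:=\mathbf{P}_m\mathbf{B}$ and $\bB_2:=\mathbf{Q}_m\mathbf{B}$, i.e.\ the limits of the \emph{full projected} averaged nonlinearity $\mathbf{P}_m M_{t_n}\big[(\by+\bz)\cdot\nabla(\by+\bz)\big]$; equations \eqref{eq:Ry}--\eqref{eq:Rz} then follow by testing with the eigenfunctions $\mathcal{W}_k$ and De Rham, with no cross-term bookkeeping at all. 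You should adopt this definition of $\bB_1,\bB_2$ to make your argument close. To be fair to you, the difficulty you ran into is real: item i) of the statement, read literally as convergence of the self-interaction averages to $\bB_1,\bB_2$, is not what the paper's own proof establishes either --- the proof only yields convergence of the projected full nonlinearity, and the subsequent identification $\reyny=\overline{\by'\otimes\by'}$ inherits the same looseness. Your blind attempt thus rediscovered precisely the step the paper circumvents by projecting the known limit $\mathbf{B}$ rather than re-running the compactness argument on the split system.
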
 
Arguing as in~\cite{BL2018, Lew2015}, using the relations
$(\mean{\bz}\cdot\nabla)\,\mean{\bz}=\nabla\cdot (\mean{\bz} \otimes \mean{\bz})$ and
$(\mean{\by}\cdot\nabla)\,\mean{\by} = \nabla\cdot (\mean{\by} \otimes \mean{\by})$, we
get the existence of ``small frequencies'' and ``large frequencies'' Reynolds stresses
$\reyny$ and $\reynz$ in $V'$, such that
\begin{equation*} 
  B_1=  \nabla \cdot \reyny + (\mean{\by} \cdot \nabla)\,\mean{\by}\qquad\text{and}\qquad
  B_2=\nabla\cdot\reynz+(\mean{\bz} \cdot \nabla)\,\mean{\bz},  
\end{equation*}
or equivalently, if we write the Reynolds decomposition as 
\begin{equation*} 
  \by = \mean{\by} + \by' \quad\text{and} \quad \bz = \mean{\bz} + \bz',
\end{equation*}
then 
\begin{equation*} 
  \reyny= 
  \overline { \by' \otimes \by'}\qquad\text{and}
  \qquad \reynz= 
  \overline { \bz' \otimes \bz'}, 
\end{equation*}
where the bar operator denotes the limit of the $M_{t_n}$'s in $V'$ as $n \to \infty$
(eventually after having extracted another sub-sequence). 

According to the budget~\eqref{eq:reyn_dissipative}, we aim to compare the turbulent
dissipation of small and large scales, denoted as $\varepsilon^\downarrow$ and
$\varepsilon^\uparrow$ respectively, to the total work of the Reynolds stresses, namely
$(\nabla \cdot\reyny, \mean{\by})$ and $(\nabla \cdot\reynz, \mean{\bz})$, where
\begin{equation*}  
  \epsilon^\downarrow:=\nu\,\mean{\|\nabla{\by}'\|^2}\quad\text{and} \quad
  \epsilon^\uparrow:=\nu\,\mean{\|\nabla{\bz}'\|^2}.
\end{equation*}
When we compare to~\eqref{eq:reyn_dissipative}, we observe that triad nonlinear effect
between small and large frequencies will be felt, that means the nonlinear interactions
due to the convection will be provided by the term
  \begin{equation}
    \label{eq:def-phi}
    \Phi_{\bz}(\by):=(\mathbf{Q}_m [(\by+\bz)\cdot\nabla(\by+\bz)],\bz)
    =-\Phi_{\by}(\bz)=-(\mathbf{P}_m [(\by+\bz)\cdot\nabla(\by+\bz)],\by).
  \end{equation}
%
  Notice that due to the regularity of $\by$, it is easily checked that the following
  energy balance holds true (this property will be shown with more details in the proof of
  Theorem~\ref{thm:newtheorem} below)
  \begin{equation*}
    \nu\,\|\nabla\mean{\by}\|^2+ 
   ( \nabla \cdot \reyny, \mean{\by})= \, <{\bff},\mean{\by} >.
  \end{equation*}
  Finally, we will use the following orthogonality relation (see
  e.g.~\cite[Lemma~4.4]{BL2018}), formally written as follows
\begin{equation} 
  \label{eq:orthogonality} 
  \overline {\| \nabla \psi \|^2} = \| \nabla \mean{\psi } \|^2 + \overline { \| \nabla \psi ' \|^2 }. 
\end{equation}
  %
\begin{theorem}  
  \label{thm:newtheorem2} 
  The families $(M_t( \Phi_{\bz}(\by)))_{t>0}$ and $(M_t( \Phi_{\by}(\bz)))_{t>0}$ 
  converge (along certain subsequences) as $t \to \infty$. Let $\mean{\Phi_{\bz}(\by)}$ and
  $\mean{\Phi_{\by}(\bz)}$ denote the corresponding limits. One has
  \begin{equation} 
    \label{eq:sign_psi} 
    \mean{\Phi_{\bz}(\by)} = - \mean{\Phi_{\by}(\bz)}\le 0, 
  \end{equation} and the following dissipation balances hold true
  \begin{eqnarray} 
    && \label{eq:dissipation}
    \epsilon^\downarrow + \mean{\Phi_{\by}(\bz)} = ( \nabla
    \cdot \reyny, \mean{\by}), 
    \\
    &&  \label{eq:dissipation2}
    \epsilon^\uparrow + \mean{\Phi_{\bz}(\by)} \le ( \nabla \cdot \reynz, \mean{\bz}).
  \end{eqnarray}
\end{theorem}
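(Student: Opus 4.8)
The plan is to extract the two scalar relations contained in the spectral system~\eqref{eq:splitting-spectral}, average them in time, and pass to the limit along one common subsequence. First I would note that, since $(B(\bu,\bu),\bu)=0$, the convective terms in~\eqref{eq:splitting-spectral} are precisely $(B(\by+\bz,\by+\bz),\by)=\Phi_{\by}(\bz)$ and $(B(\by+\bz,\by+\bz),\bz)=\Phi_{\bz}(\by)$, so that~\eqref{eq:splitting-spectral} reads
\begin{equation*}
  \tfrac{1}{2}\tfrac{d}{dt}\|\by\|^2+\nu\|\nabla\by\|^2+\Phi_{\by}(\bz)=(\bff,\by),
  \qquad
  \tfrac{1}{2}\tfrac{d}{dt}\|\bz\|^2+\nu\|\nabla\bz\|^2+\Phi_{\bz}(\by)\le0.
\end{equation*}
The same skew-symmetry gives the pointwise-in-time identity $\Phi_{\by}(\bz)+\Phi_{\bz}(\by)=0$, which is what will yield the equality $\mean{\Phi_{\bz}(\by)}=-\mean{\Phi_{\by}(\bz)}$ in~\eqref{eq:sign_psi}.

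For the convergence statement I would extract, as in Theorem~\ref{thm:newtheorem}, a single sequence $t_n\to\infty$ along which $M_{t_n}(\by)\rightharpoonup\mean{\by}$ and $M_{t_n}(\bz)\rightharpoonup\mean{\bz}$ in $V$ and along which the bounded real sequences $M_{t_n}(\|\nabla\by\|^2)$ and $M_{t_n}(\|\nabla\bz\|^2)$ converge; all these quantities are bounded thanks to~\eqref{eq:long-time-estimates}. Averaging the $\by$-equality and solving for $M_{t_n}(\Phi_{\by}(\bz))$, the boundary term is of order $1/t_n\to0$, the forcing term obeys $M_{t_n}((\bff,\by))=(\bff,M_{t_n}(\by))\to(\bff,\mean{\by})$ by weak convergence in $H$, and the dissipation average converges by construction; hence $M_{t_n}(\Phi_{\by}(\bz))$ converges and, by skew-symmetry, so does $M_{t_n}(\Phi_{\bz}(\by))$.

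Next I would settle the sign in~\eqref{eq:sign_psi} and the two balances. Averaging the $\bz$-inequality and letting $t_n\to\infty$ (the boundary term vanishing) gives $\nu\,\mean{\|\nabla\bz\|^2}+\mean{\Phi_{\bz}(\by)}\le0$, and since $\nu\,\mean{\|\nabla\bz\|^2}\ge0$ this forces $\mean{\Phi_{\bz}(\by)}\le0$, hence~\eqref{eq:sign_psi}. For~\eqref{eq:dissipation}, passing to the limit in the $\by$-equality yields $\nu\,\mean{\|\nabla\by\|^2}+\mean{\Phi_{\by}(\bz)}=(\bff,\mean{\by})$; testing the averaged equation~\eqref{eq:Ry} against $\mean{\by}\in V$ (the pressure drops out and the transport part $(\mean{\by}\cdot\nabla)\mean{\by}$ pairs to zero with $\mean{\by}$) identifies $(\bff,\mean{\by})=\nu\|\nabla\mean{\by}\|^2+(\nabla\cdot\reyny,\mean{\by})$. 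Inserting the orthogonality relation~\eqref{eq:orthogonality} for $\by$, i.e. $\mean{\|\nabla\by\|^2}=\|\nabla\mean{\by}\|^2+\mean{\|\nabla\by'\|^2}$, and cancelling $\nu\|\nabla\mean{\by}\|^2$ produces $\epsilon^\downarrow+\mean{\Phi_{\by}(\bz)}=(\nabla\cdot\reyny,\mean{\by})$. The same computation for $\bz$—now starting from the inequality and testing~\eqref{eq:Rz} against $\mean{\bz}$, which gives $(\nabla\cdot\reynz,\mean{\bz})=-\nu\|\nabla\mean{\bz}\|^2$—combined with~\eqref{eq:orthogonality} for $\bz$ yields~\eqref{eq:dissipation2}.

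The step I expect to be the main obstacle is the passage to the limit in the quadratic dissipation terms: $M_{t_n}(\|\nabla\psi\|^2)$ does not converge to $\|\nabla\mean{\psi}\|^2$ but retains the fluctuation defect $\mean{\|\nabla\psi'\|^2}$, the average of a square exceeding the square of the average. Correct bookkeeping of this defect, supplied by the orthogonality relation~\eqref{eq:orthogonality}, is precisely what isolates $\epsilon^\downarrow$ and $\epsilon^\uparrow$; for $\by$, living in the finite-dimensional span of $\{\mathcal{W}_1,\dots,\mathcal{W}_m\}$, it gives the equality~\eqref{eq:dissipation}, whereas for $\bz$ one only has the energy inequality of~\eqref{eq:splitting-spectral}—itself a consequence of lower semicontinuity of the norm for weak solutions—so that~\eqref{eq:dissipation2} remains an inequality. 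One must also be sure that all the weak limits, including the nonlinear averages $M_{t_n}((\by\cdot\nabla)\by)$ and $M_{t_n}((\bz\cdot\nabla)\bz)$ and the associated Reynolds stresses $\reyny,\reynz$ from Theorem~\ref{thm:newtheorem}, are taken along this same sequence $t_n$.
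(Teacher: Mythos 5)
Your proposal is correct and follows essentially the same route as the paper's proof: average the two spectral energy relations, deduce convergence of $M_{t_n}(\Phi_{\by}(\bz))$ from the $\by$-budget (an equality, since $\by$ solves a finite-dimensional system) and transfer it to $\Phi_{\bz}(\by)$ via the skew-symmetry identity~\eqref{eq:egality_psi}, then combine the tested averaged equations~\eqref{eq:first-equality-y} and~\eqref{eq:star} with the orthogonality relation~\eqref{eq:orthogonality} to isolate $\epsilon^\downarrow$, $\epsilon^\uparrow$ and obtain~\eqref{eq:sign_psi}--\eqref{eq:dissipation2}. The only difference is cosmetic ordering (the paper first derives the $\limsup$ bound~\eqref{eq:energy_4} for the $\bz$-part and defers convergence to the $\by$-budget, while you settle convergence first), which does not affect correctness.
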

\begin{remark} 
  Notice that by equations~\eqref{eq:sign_psi} and~\eqref{eq:dissipation} we
  see that $ \reyny$ is dissipative in mean, and follows the same
  law~\eqref{eq:reyn_dissipative} as the complete Reynolds stress, namely
  \begin{equation*}
    \epsilon^\downarrow \le ( \nabla \cdot \reyny, \mean{\by}).  
  \end{equation*}
  However, nothing similar can be concluded from~\eqref{eq:dissipation} about $\reynz$,
  that might be at this stage non dissipative in mean, which permits an inverse energy
  cascade to occur.
\end{remark}
 
The results of Theorems~\ref{thm:newtheorem} and~\ref{thm:newtheorem2} are original, even
if similar results have been already obtained in~\cite{Foi197273} and reported also
in~\cite{FMRT2001a}. In that case, the results are based on the notion of deterministic
statistical solutions and on a sort of ergodic hypothesis. Even if statements could look
very similar to ours, the main difference is that we do not average over the set $H$ of
initial data, and we do not introduce probability measures on $H$, as suggested by the
work by Prodi~\cite{Pro1960,Pro1961}. Our approach is based on a more elementary
functional setting and also amenable to include treatment of sets of external forces, as
those in several numerical or practical experiments. The main point is an extension of the
results in~\cite{BL2018}.

\begin{proof}[Proof of Theorem~\ref{thm:newtheorem}]
  We know, from the results in~\cite{BL2018,Lew2015} that $\mathbf{U}_t=M_t (\bu)$ is such
  that
  \begin{equation*}
    \begin{aligned}
      & \mathbf{U}_t\rightharpoonup \overline{\bu}\qquad \text{weakly in }V,
      \\
      &M_t ((\bu\cdot\nabla)\, \bu)\rightharpoonup \mathbf{B}\qquad \text{ in
      }L^{3/2}(\Omega)\subset V' ,
    \end{aligned}
  \end{equation*}
  hence, if we define $\mathbf{F}:=\mathbf{B}-(\overline{\bu}\cdot\nabla)\,\overline{\bu}$,
  we get
  \begin{equation*}
   \nu\, (\nabla \overline{\bu},\nabla
    {\pmb\phi})+\big((\overline{\bu}\cdot\nabla)\,\overline{\bu},{\pmb\phi}\big)
    +<\mathbf{F},{\pmb\phi}>\,=\,<\bff,{\pmb\phi}>,     
  \end{equation*}
  and using $\overline{\bu}\in V$ as test function we obtain
  \begin{equation*}
    \nu\,\|\nabla \overline{\bu}\|^2+<\mathbf{F},\overline{\bu}>\,=\,<\bff,\overline{\bu}>.
  \end{equation*}

  \medskip

  We assume now that $\mathbf{P}_m \bff=\bff$, and we consider the equations
  satisfied by $\by=\mathbf{P}_m\bu$ and $\bz=\mathbf{Q_m}\bu$. In particular, the
  equation for $\by$ reads, as an abstract equation in $V_m=\mathbf{P}_m V$, as follows:
  \begin{equation*}
    \frac{d\by}{dt} +\nu A\by+\mathbf{P}_m[(\by+\bz)\cdot\nabla(\by+\bz)]=\mathbf{P}_m\bff.
  \end{equation*}
  The uniform estimates on $\bu$ from Theorem~\ref{thm:main_theorem} combined with
  Theorem~\ref{thm:thm_proj}, about the properties of the projection operator
  $\mathbf{P}_m$ as a continuous operator over $V'$, yield
  \begin{equation*}
    \begin{aligned}
      & M_{t}(\by)=\mathbf{Y}_t\rightharpoonup \overline{\by} \qquad\text{weakly in }V,
      \\
      &\mathbf{P}_m M_t ((\bu\cdot\nabla)\, \bu)=\mathbf{P}_m M_t
      [(\by+\bz)\cdot\nabla(\by+\bz)]\rightharpoonup \mathbf{P}_m
      \mathbf{B}=\mathbf{B}_1\qquad \text{weakly in }V',
    \end{aligned}
  \end{equation*}
  in such a way that $\mean{\by}$ satisfies
  \begin{equation*}
    \nu\,(\nabla\overline{\by},\nabla\mathcal{W}_k)+<(\overline{\by}\cdot\nabla)\,\overline{\by},\mathcal{W}_k> 
    +<\mathbf{F}_{\by},\mathcal{W}_k>\,=\,<\bff,\mathcal{W}_k>\qquad  \text{for all}1\leq k\leq   m,  
  \end{equation*}
  where $\mathbf{F}_{\by}:=\mathbf{B}_1-(\overline{\by}\cdot\nabla)\,\overline{\by}$,
  which leads to~\eqref{eq:Ry} by De Rham Theorem. Arguing as in~\cite{BL2018} (which was
  already mentioned above), it is easily checked that there exists $\reyny$ such that
  $\mathbf{F}_{\by} = \nabla \cdot \reyny$. Hence, being $\overline{\by}\in V$ a
  legitimate test function, we get
  \begin{equation}
    \label{eq:first-equality-y}  
    \nu\,\|\nabla\overline{\by}\|^2+<\mathbf{F}_{\by},\overline{\by}>\,=\,
    \nu\,\|\nabla\mean{\by}\|^2+ ( \nabla \cdot \reyny, \mean{\by})= \, <\bff,\overline{\by}>. 
  \end{equation}

\medskip

  The other term $\bz$ of the decomposition satisfies
  \begin{equation*}
    \frac{d}{dt} \bz+\nu A\bz+\mathbf{Q}_m[(\by+\bz)\cdot\nabla(\by+\bz)]=0.
  \end{equation*}
  The uniform estimates on $\bu$ and the boundedness of $\mathbf{P}_m$ imply the following
  convergence (up to a sub-sequence), as already shown in Theorem~\ref{thm:thm-ROM}
  \begin{equation*}
    \begin{aligned}
      &M_{t}(\bz)= \mathbf{Z}_t\rightharpoonup \overline{\bz} \qquad \text{weakly in }V,
      \\
      &\mathbf{Q}_m M_t ((\bu\cdot\nabla)\, \bu)=\mathbf{Q}_m M_t
      [(\by+\bz)\cdot\nabla(\by+\bz)]\rightharpoonup 
      \mathbf{Q}_m\mathbf{B}=\mathbf{B}_2 \qquad \text{weakly in }V'.
    \end{aligned}
  \end{equation*}
  By using that $\mathbf{B}=\mathbf{P}_m \mathbf{B}+\mathbf{Q}_m\,\mathbf{B}$, 
  we get
  \begin{equation*}
    \nu\,(\nabla\overline{\bz},\nabla\mathcal{W}_j)+<(\overline{\bz}\cdot\nabla)\,\overline{\bz},\mathcal{W}_j>+
    <\mathbf{F}_{\bz},\mathcal{W}_j>\,=0\qquad \text{for all } j\geq m+1,   
  \end{equation*}
  for $\mathbf{F}_{\bz}=\mathbf{B}_2-(\overline{\bz}\cdot\nabla)\,\overline{\bz} = \nabla
  \cdot \reynz$. Hence,~\eqref{eq:Rz} follows again by De Rham Theorem.  Notice that, by
  using as test function $\mean{\bz}\in V$ we have the following energy equality:
  \begin{equation}
    \label{eq:star}
   \nu\, \|\nabla \overline{\bz}\|^2+(\nabla \cdot \reynz, \overline{\bz}) \,=0.
  \end{equation}
  which concludes the proof. 
\end{proof}
\begin{proof}[Proof of Theorem~\ref{thm:newtheorem2}]  We now write the energy inequality
  for $\bz$, obtaining 
  \begin{equation*}
    \frac{1}{2}  \frac{d}{dt}\|\bz\|^2+\nu\,\|\nabla \bz\|^2+(\mathbf{Q}_m
    [(\by+\bz)\cdot\nabla(\by+\bz)],\bz)\leq0,
  \end{equation*}
  and hence, by using the orthogonality of the basis, we have that $\mathbf{Q}_m\bz=\bz$ and
  \begin{equation*}
    \frac{1}{2}\frac{d}{dt}\|\bz\|^2+\nu\,\|\nabla \bz\|^2+\big( [(\by+\bz)\cdot\nabla(\by+\bz)],\bz\big)=
    \frac{1}{2}  \frac{d}{dt}\|\bz\|^2+\|\nabla \bz\|^2+\Phi_{\bz}(\by)\leq0,
  \end{equation*}
  recalling the definition of $\Phi_{\bz}(\by)$ in~\eqref{eq:def-phi}.
  
  Averaging over a fixed time interval $(0,t)$, we get
  \begin{equation*}
    \frac{1}{2t}\|\bz(t)\|^2-
    \frac{1}{2t}\|\bz(0)\|^2+ \nu  M_t( \|\nabla\bz\|^2) +M_t (\Phi_{\bz}(\by)) \leq0.  
  \end{equation*}
  The $L^2$-uniform bounds on $\bz$ imply that $\frac{1}{2t}\|\bz(t)\|^2\to0$, hence,
  possibly after having extracted another sub-sequence to ensure the convergence of the
  term $M_t( \|\nabla\bz\|^2)$ (that is known to be bounded by the energy
  inequality~\eqref{eq:long-time-estimates}) we get
  \begin{equation} 
    \label{eq:energy_4} 
    \limsup_{n \to \infty}  M_{t_n} (\Phi_{\bz}(\by)) \le - \nu\,
    \mean{\|\nabla\bz\|^2}\le 0.  
  \end{equation}
  We now combine the orthogonality relation~\eqref{eq:orthogonality} with the energy
  balance~\eqref{eq:star}, so that~\eqref{eq:energy_4} yields
  \begin{equation*} 
    \epsilon^\uparrow+\limsup_{n \to \infty}  M_{t_n}(\Phi_{\bz}(\by)) \le ( \nabla
    \cdot\reynz, \mean{\bz}), 
  \end{equation*} 
  which is almost inequality~\eqref{eq:dissipation2}, up to the convergence of
  $(M_t(\Phi_{\bz}(\by)))_{t >0}$ that remains to be proved.  To prove this, we deal with
  the budget for $\by$, reminding that
  \begin{equation} 
    \label{eq:egality_psi} 
    \Phi_{\bz}(\by)=(Q_m [(\by+\bz)\cdot\nabla(\by+\bz)],\bz)=-(\mathbf{P}_m
    [(\by+\bz)\cdot\nabla(\by+\bz)],\by)=-\Phi_{\by}({\bz}). 
  \end{equation}
  Then, averaging the energy equality (in this case we have equality since $\by$ solves a
  finite dimensional system of ordinary differential equations) which is satisfied for
  $\by$, we get
  \begin{equation} 
    \label{eq:energy_5} 
    \frac{1}{2{t}}\|\by({t})\|^2-
    \frac{1}{2{t}}\|\by(0)\|^2+ \nu M_t(\|\nabla\by\|^2)+M_t(\Phi_{\by}(\bz))
    =\,<\bff,M_t(\by)>.
  \end{equation}
  By the same argument, eventually after having extracted a further sub-sequence,
  $(M_t(\|\nabla\by\|^2))_{t>0}$ is convergent as $t_n \to \infty$, as well as
  $(<\bff,M_t(\by)>)_{t >0}$. Therefore, $\{M_{t_n}(\Phi_{\by}(\bz))\}$ is also convergent
  by~\eqref{eq:energy_5}. Let $\mean{\Phi_{\by}(\bz)}$ denotes its limit. In particular,
  by~\eqref{eq:egality_psi}, $\{M_{t_n} (\Phi_{\bz}(\by)))\}$ is also convergent, with
  limit $\mean{\Phi_{\bz}(\by)} = - \mean{\Phi_{\by}(\bz)}$. We are done
  with~\eqref{eq:dissipation2}. 

  It remains to check~\eqref{eq:dissipation}.  Taking the limit as $n \to \infty$
  in~\eqref{eq:energy_5} gives the equality
  \begin{equation*}  
    \nu\,\overline{ \|\nabla\by\|^2 } + \mean{\Phi_{\bz}(\by)}= \,<\bff,\overline{\by}>, 
  \end{equation*}
  which, combined with the energy balance~\eqref{eq:first-equality-y} and the orthogonality
  relation~\eqref{eq:orthogonality}, yields~\eqref{eq:dissipation}, ending the proof.
\end{proof}
%
%
\section{Numerical results}
	\label{sec:numerical-results}
%
%
In Theorem~\ref{thm:thm-ROM}, we showed that 
      \begin{equation}
        \liminf_{T\to+\infty}\frac{1}{T}\int_0^{T} \mathrm{e}_m(\bu(s)) + \mathcal{E}_{m}(\bu(s)) \,ds \geq0 .
        \label{eqn:numerical-results-1}
      \end{equation}
In this section, we investigate numerically whether the inequality~\eqref{eqn:numerical-results-1} holds.
To this end, we consider the one-dimensional Burgers equation with homogeneous boundary conditions as a simplified, yet relevant test case:
\begin{equation}
  \begin{cases}
    \displaystyle u_t -\nu u_{xx} + u u_x  = f ~~~~~~~~~~~~ (x,t) \in \Omega\times [0,1],
    \\
    \displaystyle~~~~~~~~~~~~~~~~~~~~~~ u = 0 ~~~~~~~~~~ (x,t) \in \partial\Omega\times [0,1].
  \end{cases}
  \label{eqn:burgers}
\end{equation}

To calculate the long-time average of $\mathrm{e}_m(u)$ in~\eqref{eqn:numerical-results-1}, we use the composite trapezoidal rule: 
\begin{eqnarray}
	\frac{1}{T} \int_{0}^{T} \mathrm{e}_m(u(s)) \, ds 
	\approx \frac{1}{2n} \sum_{i=1}^{n} \bigl( \mathrm{e}_m(u(t_i))+\mathrm{e}_m(u(t_{i+1})) \bigr) ,
	\label{eqn:trapezoidal}
\end{eqnarray} 
where $t_i = (i-1)*\frac{T}{n},~i=1,...,n+1.$
We also use the composite trapezoidal rule to calculate the long-time average of $\mathcal{E}_{m}(u)$.

\subsection{Numerical Results with step function initial condition}
Our numerical results are obtained by using the one-dimensional Burgers equation~\eqref{eqn:burgers}
with a step function initial condition~\cite{iliescu2014are,xie2018data}:
\begin{equation}
  u_0(x)=\begin{cases}
    \displaystyle~ 1, & x \in (0,1/2],
    \\
    ~\displaystyle 0, & x \in (1/2,1]. 
    \label{step function}
\end{cases}
\end{equation}
We use the following parameters in the finite element discretization of the Burgers equation~\eqref{eqn:burgers}: $\Omega =  [0,1]$, $\nu=10^{-2}$, $f = 0$, mesh size $h=1/128$,  piecewise linear finite element spatial discretization, and backward Euler time discretization.

\subsubsection{Case 1:}

For this test case, we consider the time interval $[0,T] = [0,1]$ and the time step $\Delta t = 10^{-2}$. 
We utilize all the snapshots to build the POD basis, whose dimension is $d=37$.
In the composite trapezoidal rule, we use $n=100$.
In Figure~\ref{fig:numerical-results-1}, we plot the DNS results (which are used to generate the snapshots).
In Table~\ref{tab:numerical-results-1}, we list the time-averages of $\mathrm{e}_m(u)$ and $\mathcal{E}_{m}(u)$ for different $m$ values.
We note that the time-average of $\mathrm{e}_m(u)$ is positive for all $m$ values.
The time-average of $\mathcal{E}_{m}(u)$ is positive for the low $m$ values and negative for the largest $m$ values.
Furthermore, the magnitude of the time-average of $\mathcal{E}_m(u)$ is lower than the magnitude of the time-average of $\mathrm{e}_m(u)$.
Thus, we conclude that the time average $\frac{1}{T}\int_0^{T} \mathrm{e}_m(u(s)) + \mathcal{E}_{m}(u(s)) \,ds$ in~\eqref{eqn:numerical-results-1} is positive for all $m$ values.

\begin{figure}[h]\centering
  \includegraphics[width= 0.8\textwidth]{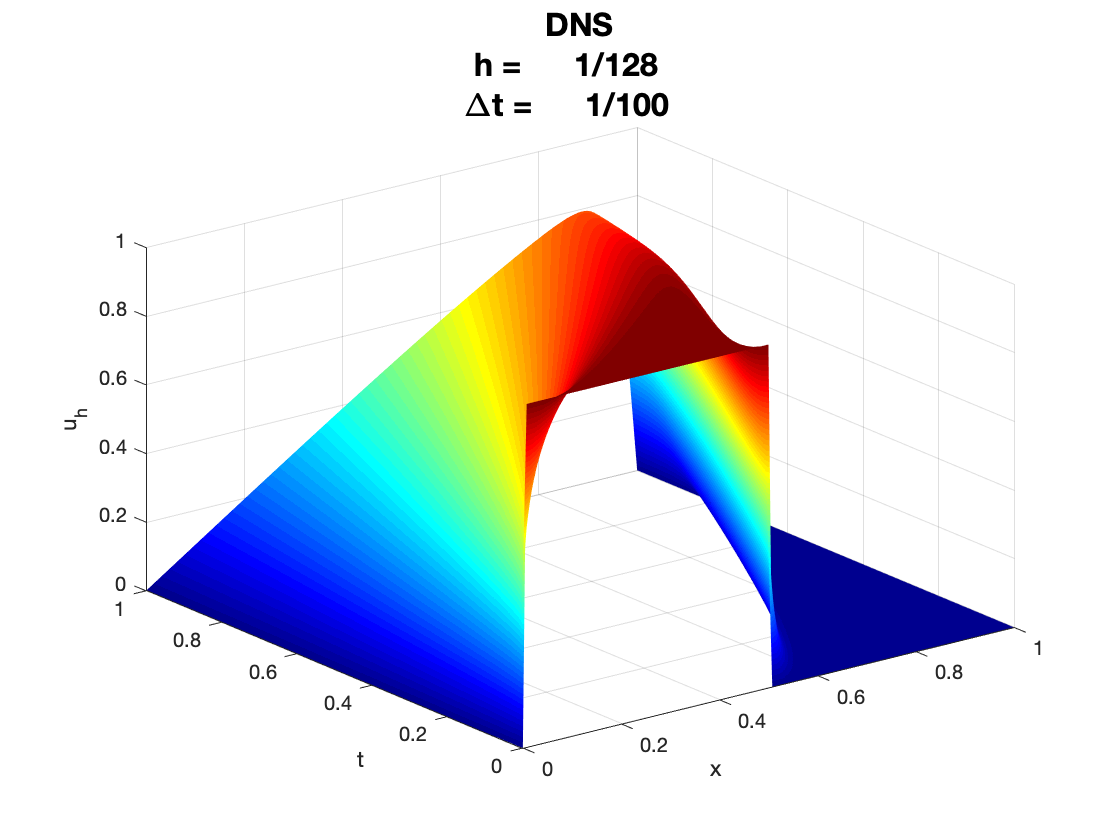}
 \label{fig:numerical-results-1}
\end{figure}

\begin{table}[h]
		\begin{tabular}{|c|c|c|c|}
			\hline
			\multicolumn{1}{ |c| }{  $m$  } &		
			\multicolumn{1}{ |c| }{ $\displaystyle  \int_{0}^{1} \mathrm{e}_m(u(s)) \, ds$ }  &
			\multicolumn{1}{ |c| }{ $\displaystyle  \int_{0}^{1} \mathcal{E}_m(u(s)) \, ds$ }  \\		    
			\hline
			3 &2.6170e-02 & 1.0451e-03\\ 
			\hline
			6 & 7.3208e-03 & 2.2524e-03  \\ 
			\hline
			9 &   1.4934e-03 & 1.5977e-03  \\ 
			\hline
			15 &   3.6181e-05 & 3.6287e-05 \\ 
			\hline
			20 &  1.2776e-06 & 7.0356e-07 \\ 
			\hline
			25 &  4.6207e-08  & 9.5638e-09 \\ 
			\hline
			30 &   2.0257e-09 & -1.1127e-10 \\ 
			\hline
			35 &   8.2806e-11 &  -2.2595e-11 \\ 
			\hline
			
		\end{tabular}
		\caption{
			Case 1: Time-averages of $\mathrm{e}_m(u)$ and $\mathcal{E}_{m}(u)$ for $d=37$ and different $m$ values.
			\label{tab:numerical-results-1}			
			}
\end{table}

In Case 1, we showed that the time average $\frac{1}{T}\int_0^{T} \mathrm{e}_m(u(s)) + \mathcal{E}_{m}(u(s)) \,ds$ in~\eqref{eqn:numerical-results-1} is positive.
In the remainder of this section, we investigate whether this time average remains positive if we make the following changes in our computational setting:
(i) we increase/decrease the time-interval; and 
(ii) we use more quadrature points (i.e., subintervals) in the composite trapezoidal rule~\eqref{eqn:trapezoidal}.


\subsubsection{Case 2:}
In this case, we use a longer time interval, i.e., $[0,T] = [0,10]$ (instead of $[0,T] = [0,1]$, as we used in Case 1). 
We also use different time step ($\Delta t$) values to generate the snapshots and a different number of quadrature points to evaluate the time average $\frac{1}{T}\int_0^{T} \mathrm{e}_m(u(s))$.

In Tables~\ref{tab:numerical-results-2}--\ref{tab:numerical-results-5}, we list the time-averages of $\mathrm{e}_m(u)$ and $\mathcal{E}_{m}(u)$ for different time steps ($\Delta t$) values, different number of equally spaced quadrature points ($n$), and different $m$ values.
We note that the time-averages of $\mathrm{e}_m(u)$ and $\mathcal{E}_{m}(u)$ are positive for all $\Delta t$, $n$, and $m$ values.
Furthermore, the magnitude of the time-average of $\mathcal{E}_m(u)$ is lower than the magnitude of the time-average of $\mathrm{e}_m(u)$.
Thus, we conclude that the time average $\frac{1}{T}\int_0^{T} \mathrm{e}_m(u(s)) + \mathcal{E}_{m}(u(s)) \,ds$ in~\eqref{eqn:numerical-results-1} is positive for all $\Delta t$, $n$, and $m$ values.
Furthermore, we note that decreasing the time step while keeping the same number of snapshots (i.e., $n=10000$) does not change the time average $\frac{1}{T}\int_0^{T} \mathrm{e}_m(u(s)) + \mathcal{E}_{m}(u(s)) \,ds$ significantly (see Tables~\ref{tab:numerical-results-3}--\ref{tab:numerical-results-5}).

\begin{table}[h]
		\begin{tabular}{|c|c|c|c|}
			\hline
			\multicolumn{1}{ |c| }{  $m$  } &	
			\multicolumn{1}{ |c| }{ $\displaystyle  \frac{1}{10} \int_{0}^{10} \mathrm{e}_m(u(s))\,ds$ }  &
			\multicolumn{1}{ |c| }{ $\displaystyle  \frac{1}{10} \int_{0}^{10} \mathcal{E}_m(u(s)) \,ds$ } \\		   
			\hline
			3 &   3.3652e-03 & 8.6523e-05 \\ 
			\hline
			6 &   1.0001e-03 & 2.1570e-04 \\ 
			\hline
			9 &  2.1132e-04 & 1.8614e-04  \\ 
			\hline
			15 &  5.4224e-06 & 5.5724e-06  \\ 
			\hline
			20 &   4.3119e-07 & 2.5667e-07  \\ 
			\hline
			25 &   5.6884e-08 &  1.0451e-08  \\ 
			\hline
			30 &   1.1327e-09 & 3.2736e-10  \\ 
			\hline
			35 &   2.0469e-11 & 2.6396e-12  \\ 
			\hline
			
		\end{tabular}
		\caption{
			Case 2: Time-averages of $\mathrm{e}_m(u)$ and $\mathcal{E}_{m}(u)$ for $d=38$, $\Delta t = 10^{-2}$, $1000$ equally spaced quadrature points, and different $m$ values.
			\label{tab:numerical-results-2}			
		}
		\begin{tabular}{|c|c|c|c|}
			\hline
			\multicolumn{1}{ |c| }{ $ m $ } &	
			\multicolumn{1}{ |c| }{ $\displaystyle  \frac{1}{10} \int_{0}^{10} \mathrm{e}_m(u(s))ds$ }  &
			\multicolumn{1}{ |c| }{ $\displaystyle  \frac{1}{10} \int_{0}^{10} \mathcal{E}_m(u(s)) \,ds$ }  \\		    
			\hline
			3 &   3.5296e-03 & 3.0515e-06 \\ 
			\hline
			6 &  1.1402e-03 & 8.8746e-06 \\ 
			\hline
			9 &  2.6359e-04 & 1.5090e-05  \\ 
			\hline
			15 &  1.1126e-05 & 1.1156e-05 \\ 
			\hline
			20 &   9.5913e-07 & 1.5183e-06 \\ 
			\hline
			25 &   1.8140e-07 & 1.8704e-07 \\ 
			\hline
			30 &   4.8765e-08 & 1.3491e-08 \\ 
			\hline
			35 &   1.0291e-09 & 9.7629e-10 \\ 
			\hline
			40 &   2.4680e-11 & 2.0416e-11 \\ 
			\hline			
		\end{tabular}
		\caption{
			Case 2: Time-averages of $\mathrm{e}_m(u)$ and $\mathcal{E}_{m}(u)$ for $d=41$, $\Delta t = 10^{-3}$, $10000$ equally spaced quadrature points, and different $m$ values.
			\label{tab:numerical-results-3}			
		}
\end{table}

\begin{table}[h]
		\begin{tabular}{|c|c|c|c|}
			\hline
			\multicolumn{1}{ |c| }{  $m $ } &		
			\multicolumn{1}{ |c| }{ $\displaystyle  \frac{1}{10} \int_{0}^{10} \mathrm{e}_m(u(s))ds$ }  &
			\multicolumn{1}{ |c| }{ $\displaystyle  \frac{1}{10} \int_{0}^{10} \mathcal{E}_m(u(s)) \,ds$ }  \\		    
			\hline
			3 &  3.5637e-03 & 2.7841e-06 \\ 
			\hline
			6 &   1.1664e-03 & 8.1109e-06 \\ 
			\hline
			9 &   2.7346e-04 & 1.3956e-05 \\ 
			\hline
			15 &   1.2084e-05 & 1.1937e-05 \\ 
			\hline
			20 &   1.1785e-06 & 1.6370e-06 \\ 
			\hline
			25 &  2.2727e-07 & 1.2913e-07 \\ 
			\hline
			30 &   6.2606e-08 &  9.5191e-09 \\ 
			\hline
			35 &   2.1106e-09 &  6.1538e-10 \\ 
			\hline
			40 &   1.0330e-10 & 2.0241e-11 \\ 
			\hline			
		\end{tabular}
		\caption{
			Case 2: Time-averages of $\mathrm{e}_m(u)$ and $\mathcal{E}_{m}(u)$ for $d=43$, $\Delta t = 10^{-4}$, $10000$ equally spaced quadrature points, and different $m$ values.
			\label{tab:numerical-results-4}			
		}
		\begin{tabular}{|c|c|c|c|}
			\hline
			\multicolumn{1}{ |c| }{  $m $ } &	
			\multicolumn{1}{ |c| }{ $\displaystyle  \frac{1}{10} \int_{0}^{10} \mathrm{e}_m(u(s))ds$ }  &
			\multicolumn{1}{ |c| }{ $\displaystyle  \frac{1}{10} \int_{0}^{10} \mathcal{E}_m(u(s)) \,ds$ }  \\		    
			\hline
			3 &  3.5668e-03 & 2.7594e-06 \\ 
			\hline
			6 & 1.1688e-03 & 8.0390e-06 \\ 
			\hline
			9 &  2.7436e-04 & 1.3843e-05 \\ 
			\hline
			15 &  1.2172e-05 & 1.2002e-05 \\ 
			\hline
			20 &   1.2030e-06 & 1.6405e-06 \\ 
			\hline
			25  & 2.3339e-07 & 1.2463e-07 \\ 
			\hline
			30  &  6.4120e-08 & 8.8971e-09 \\ 
			\hline
			35  &  2.2654e-09 & 5.6187e-10 \\ 
			\hline
			40  &  1.1211e-10 & 1.8216e-11 \\ 
			\hline			
		\end{tabular}
		\caption{
			Case 2: Time-averages of $\mathrm{e}_m(u)$ and $\mathcal{E}_{m}(u)$ for $d=43$, $\Delta t = 2*10^{-5}$, $10000$ equally spaced quadrature points, and different $m$ values.
			\label{tab:numerical-results-5}			
		}
\end{table}


\subsubsection{Case 3: }

In this case, we use an even longer time interval, i.e., $[0,T] = [0,100]$, and compare the time-averages for this time interval to those for the time intervals $[0,T] = [0,1]$ (Case 1) and $[0,T] = [0,10]$ (Case 2). 
For each time interval, we use the same time step values ($\Delta t=10^{-2}$) to generate the snapshots and all the subintervals in the composite trapezoidal rule utilized in the evaluation of the time average $\frac{1}{T}\int_0^{T} \mathrm{e}_m(u(s))$.
In Table~\ref{tab:numerical-results-6}, we list the time-averages of $\mathrm{e}_m(u)$ and $\mathcal{E}_{m}(u)$ for all three time intervals and different $m$ values.
We note that the time-averages of $\mathrm{e}_m(u)$ and $\mathcal{E}_{m}(u)$ are positive for all time intervals and $m$ values.
Furthermore, the magnitude of the time-average of $\mathcal{E}_m(u)$ is generally lower than the magnitude of the time-average of $\mathrm{e}_m(u)$.
Thus, we conclude that the time-average $\frac{1}{T}\int_0^{T} \mathrm{e}_m(u(s)) + \mathcal{E}_{m}(u(s)) \,ds$ in~\eqref{eqn:numerical-results-1} is positive for all time intervals and $m$ values.
Furthermore, we note that the time-averages of $\mathrm{e}_m(u)$ and $\mathcal{E}_{m}(u)$ for the time intervals $[0,T] = [0,100]$ and $[0,T] = [0,10]$ are close, whereas those for the time interval $[0,T] = [0,1]$ are slightly different.  
Thus, we conclude that the time interval $[0,T] = [0,10]$ is adequate for the approximation of the long time-average $\frac{1}{T}\int_0^{T} \mathrm{e}_m(u(s)) + \mathcal{E}_{m}(u(s)) \,ds$.   


\begin{table}[h]
	\begin{center}
		\begin{tabular}{|c|c|c|c|c|}
			\hline
			\multicolumn{1}{ |c| }{  $m$  } &
			\multicolumn{1}{ |c| }{ $\displaystyle  \frac{1}{100} \int_{0}^{100} \mathrm{e}_m(u(s)) \, ds$ }  &
			\multicolumn{1}{ |c|}{ $\displaystyle  \frac{1}{10} \int_{0}^{10} \mathrm{e}_m(u(s)) \, ds$ }  &					
			\multicolumn{1}{ |c| }{ $\displaystyle   \int_{0}^{1} \mathrm{e}_m(u(s)) \, ds$ }  \\		    
			\hline
			3 &   3.3687e-04 & 3.3683e-04 &  1.7634e-04  \\ 
			\hline
			6 &  1.0001e-04 & 1.0001e-04 & 7.3142e-05  \\ 
			\hline
			9 & 2.1146e-05 & 2.1147e-05 &  1.6701e-05   \\ 
			\hline
			15 &  5.5621e-07 & 5.5742e-07 &  4.7129e-07   \\ 
			\hline
			20 &  7.9605e-08 & 7.9605e-08 & 6.3799e-08   \\ 
			\hline
			25 &   8.3483e-09 & 8.3489e-09  &  5.7426e-09  \\ 
			\hline
			30 &   1.0839e-10 & 1.0840e-10  & 9.2430e-11   \\ 
			\hline
			35 &   2.7710e-12 & 2.7718e-12  &  2.9575e-12  \\ 
			\hline			
		\end{tabular}
		\caption{Case 3: Time-averages of $\mathrm{e}_m(u)$ for $d=36$, $\Delta t = 10^{-2}$, different $m$ values, and all subintervals used in the composite trapezoidal rule.
			\label{tab:numerical-results-6}			
		}
	\end{center}
\end{table}

\begin{table}[h]
	\begin{center}
		\begin{tabular}{|c|c|c|c|}
			\hline
			\multicolumn{1}{ |c| }{  $m $ } &
			\multicolumn{1}{ |c| }{ $\displaystyle  \frac{1}{100} \int_{0}^{100} \mathcal{E}_m(u(s)) \, ds$ }  &
			\multicolumn{1}{ |c|}{ $\displaystyle  \frac{1}{10} \int_{0}^{10} \mathcal{E}_m(u(s))   \,ds$ }  &					
			\multicolumn{1}{ |c| }{ $\displaystyle   \int_{0}^{1} \mathcal{E}_m(u(s)) \,ds$ }    \\		    
			\hline
			3  & 8.6270e-06  & 7.0320e-06 &     8.1057e-05  \\ 
			\hline
			6  & 2.1568e-05  & 2.1520e-05 &     3.2405e-05 \\ 
			\hline
			9  & 1.8614e-05  & 1.8599e-05 &  2.0255e-05 \\ 
			\hline
			15  & 5.5718e-07 & 5.4232e-07 &    5.8207e-07 \\ 
			\hline
			20  & 5.6044e-08  & 5.5874e-08 &  6.1423e-08 \\ 
			\hline
			25  & 1.0936e-09 & 9.9763e-10 &    5.9386e-10 \\ 
			\hline
			30  & 3.3046e-11 & 3.2478e-11 &    4.0949e-11  \\ 
			\hline
			35  & 6.4170e-13  & 6.3739e-13 &    1.2125e-12 \\ 
			\hline			
		\end{tabular}
		\caption{
			Case 3: Time-averages of $\mathcal{E}_{m}(u)$ for $d=36$, $\Delta t = 10^{-2}$, different $m$ values, and all subintervals used in the composite trapezoidal rule.
			\label{tab:numerical-results-6b}			
		}
	\end{center}
\end{table}

\subsubsection{Case 4:}

In this case, we use a much shorter time interval, i.e., $[0,T] = [0,0.1]$, and compare the time-averages for this time interval to those for the time intervals $[0,T] = [0,1], [0,T] = [0,10],$ and $ [0,T] = [0,100]$ (Case 3). 
We use two different time step values to generate the snapshots, but the same (i.e., $n=5000$) equally spaced subintervals in the composite trapezoidal rule utilized in the evaluation of the time average $\frac{1}{T}\int_0^{T} \mathrm{e}_m(u(s))$.
In Tables~\ref{tab:numerical-results-7}--\ref{tab:numerical-results-8}, we list the time-averages of $\mathrm{e}_m(u)$ and $\mathcal{E}_{m}(u)$ for two different time step values and different $m$ values.
We emphasize that, this time, the time-average of $\mathrm{e}_m(u)$ is negative for some $m$ values.
Furthermore, the magnitude of the time-average of $\mathcal{E}_m(u)$ is this time larger than the magnitude of the time-average of $\mathrm{e}_m(u)$.
This is in stark contrast with the previous cases.


\begin{table}[h]
		\begin{tabular}{|c|c|c|c|}
			\hline
			\multicolumn{1}{ |c| }{ $ m$  } &		
			\multicolumn{1}{ |c| }{ $\displaystyle  \frac{1}{0.1} \int_{0}^{0.1} \mathrm{e}_m(u(s))  ds$ }  &
			\multicolumn{1}{ |c| }{$\displaystyle \frac{1}{0.1} \int_{0}^{0.1} \mathcal{E}_m(u(s)) ds$ }   \\		    
			\hline
			3  & -6.8687e-04 & 2.7378e-05  \\ 
			\hline
			5  & -2.6333e-05 & 1.7795e-05 \\ 
			\hline
			7 & -9.1458e-07 & 4.2432e-06 \\ 
			\hline
			9  & -9.8188e-09  & 4.1220e-07 \\ 
			\hline
			13  & 2.3800e-10 &  1.5749e-09 \\ 
			\hline
			15  & 8.5423e-12 & 4.8043e-11 \\ 
			\hline			
		\end{tabular}
		\caption{
			Case 4: Time-averages of $\mathrm{e}_m(u)$ and $\mathcal{E}_{m}(u)$ for $\Delta t = 2*10^{-5}$, different $m$ values, $d=18$, and $5000$ equally spaced subintervals used in the composite trapezoidal rule.
			\label{tab:numerical-results-7}			
		}
		\begin{tabular}{|c|c|c|c|}
			\hline
			\multicolumn{1}{ |c| }{  $m$  } &	
			\multicolumn{1}{ |c| }{ $\displaystyle  \frac{1}{0.1} \int_{0}^{0.1} \mathrm{e}_m(u(s))  ds$ }  &
			\multicolumn{1}{ |c| }{$\displaystyle \frac{1}{0.1} \int_{0}^{0.1} \mathcal{E}_m(u(s))  ds$ }  \\		    
			\hline
			3 &  -6.8807e-04 & 2.7338e-05 \\ 
			\hline
			5 &  -2.6447e-05 & 1.7812e-05 \\ 
			\hline
			7 &  -9.1691e-07 & 4.2755e-06 \\ 
			\hline
			9 &  -9.5609e-09 & 4.1694e-07 \\ 
			\hline
			13 &  2.5287e-10  & 1.5941e-09  \\ 
			\hline
			15 &  1.0725e-11 & 4.7030e-11 \\ 
			\hline						
		\end{tabular}
		\caption{
			Case 4: Time-averages of $\mathrm{e}_m(u)$ and $\mathcal{E}_{m}(u)$ for $\Delta t = 10^{-5}$, different $m$ values, $d=18$, and $5000$ equally spaced subintervals used in the composite trapezoidal rule.
			\label{tab:numerical-results-8}			
		}
\end{table}



\section{Conclusions}
	\label{sec:conclusions}

In this preliminary study, we investigated theoretically and numerically the time-average of the exchange of energy among modes of reduced order models (ROMs) of fluid flows.
In particular, we were interested in the statistical equilibrium problem, and especially in the long-time averaging of the ROM solutions.
The main goal of the paper was to deduce the possible forward and backward average transfer of the energy among ROM basis functions (modes). 
We considered two types of ROM modes: eigenfunctions of the Stokes operator and proper orthogonal decomposition (POD) modes.
In Theorem~\ref{thm:thm-ROM} and Theorem~\ref{thm:newtheorem}, we proved analytical results for both types of ROM modes and we highlighted the differences between them, especially those stemming from the lack of orthogonality of the gradients of the POD basis functions.

In Section~\ref{sec:numerical-results}, we investigated numerically whether the time-average energy exchange between POD modes (i.e., $\frac{1}{T}\int_0^{T} \mathrm{e}_m(u(s)) + \mathcal{E}_{m}(u(s)) \,ds$) in Theorem~\ref{thm:thm-ROM} is positive.
To this end, we used the one-dimensional Burgers equation as a mathematical model.
We utilized a piecewise linear FE spatial discretization and a backward Euler temporal discretization.
To compute the time-averages, we used the composite trapezoidal rule. 
We tested different time steps, different number of subintervals in the composite trapezoidal rule, and, most importantly, different time intervals, to ensure that the computed quantities are indeed approximations of the time-averages and not numerical artifacts. 
The main conclusion of our numerical study is that, for long enough time intervals (i.e., time intervals longer than $[0,T] = [0,10]$), the time-average $\frac{1}{T}\int_0^{T} \mathrm{e}_m(u(s)) + \mathcal{E}_{m}(u(s)) \,ds$ in~\eqref{eqn:numerical-results-1} is positive.
Furthermore, the magnitude of the time-average of $\mathcal{E}_m(u)$ is much lower than the magnitude of the time-average of $\mathrm{e}_m(u)$.

There are several research directions that we plan to pursue.
Probably the most important one is the numerical investigation of the theoretical results in three-dimensional, high Reynolds number flows, which could shed new light on the energy transfer among ROM modes. 
A related, but different numerical investigation was performed in~\cite{couplet2003intermodal}.

\section*{Acknowledgments}
Luigi C. Berselli and Roger Lewandowski for the research that led to the present paper
were partially supported by a grant of the group GNAMPA of INdAM and by the project the
University of Pisa within the grant PRA$\_{}2018\_{}52$~UNIPI \textit{Energy and
  regularity: New techniques for classical PDE problems.}  Traian Iliescu and Birgul Koc
gratefully acknowledge the support provided by the NSF DMS-1821145 grant.

%

\def\ocirc#1{\ifmmode\setbox0=\hbox{$#1$}\dimen0=\ht0 \advance\dimen0
  by1pt\rlap{\hbox to\wd0{\hss\raise\dimen0
  \hbox{\hskip.2em$\scriptscriptstyle\circ$}\hss}}#1\else {\accent"17 #1}\fi}
  \def\cprime{$'$} \def\polhk#1{\setbox0=\hbox{#1}{\ooalign{\hidewidth
  \lower1.5ex\hbox{`}\hidewidth\crcr\unhbox0}}} \def\cprime{$'$}


%
%
%
%
%
%
%
%
%
%
%
%
%
%
\end{document}